\let\version\today
\let\subset\subseteq
\let\eps\varepsilon
\def\dcup{\dot\cup}
\def\colhom{\stackrel{\sigma}{\to}}
\def\itmit#1{\rm ({\it #1\,})}
\def\rom{\itmit{\roman{*}}}
\def\abc{\itmit{\alph{*}}}
\def\cF{\mathcal{F}}
\def\cG{\mathcal{G}}
\def\cS{\mathcal{S}}
\def\cC{\mathcal{C}}
\def\cP{\mathcal{P}}  
\def\cR{\mathcal{R}}  
\def\Prob{\mathbb{P}}
\def\eqed{\eqno\qed}
\newtheorem{theorem}                   {Theorem}
\newtheorem{lemma}           [theorem] {Lemma}   
\newtheorem{proposition}     [theorem] {Proposition}
\newtheorem{definition}      [theorem] {Definition} 
\theoremstyle{remark}
\newcommand{\makeversion}{
  \let\oldfootnote\thefootnote %
  \renewcommand{\thefootnote}{} %
  \footnote{\version} %
  \let\thefootnote\oldfootnote %
}
\newcommand{\By}[2]{\overset{\mbox{\tiny{#1}}}{#2}}    
\newcommand{\ByRef}[2]{   \By{\eqref{#1}}{#2} }        
\newcommand{\leBy}[1]{    \By{#1}{\le} }
\newcommand{\eqByRef}[1]{ \ByRef{#1}{=} }
\newcommand{\leByRef}[1]{ \ByRef{#1}{\le} }
\newcommand{\geByRef}[1]{ \ByRef{#1}{\ge} }
\newcommand{\subref}[2][L]{\text{\tiny #1\ref{#2}}}
\newcommand{\field}[1]{\mathbb{#1}}
\newcommand{\N}{\field{N}}
\newcommand{\R}{\field{R}}
\newcommand{\Forb}{\mathcal{F}orb}
\newcommand{\PP}{\cP}
\DeclareMathOperator{\ex}{ex}
\DeclareMathOperator{\UB}{UB}
\newcommand{\BAR}[1]{{\smash{\overset{\scalebox{2}[1.2]{\!\raisebox{-1.3pt}{\_}}}{\smash{#1}\vphantom{X}}}}}
\newcommand{\antiP}{\BAR{P}_3}
\newcommand{\bleft}[1]{\left#1 \vphantom{2^2}}
 \title[Perfect graphs of fixed density]{Perfect graphs of fixed density: \\ counting
   and homogenous sets}
  \author{Julia B\"ottcher}
  \address{
    Instituto de Matem\'atica e Estat\'{\i}stica, Universidade de
    S\~ao Paulo, Rua do Mat\~ao 1010, 05508--090~S\~ao Paulo, Brazil}
  \email{julia@ime.usp.br}
  \author{Anusch Taraz } 
  \address{Zentrum Mathematik, Technische Universit\"at M\"unchen, 
    Boltzmannstra\ss{}e~3, D-85747 Garching bei M\"unchen, Germany} 
  \email{taraz@ma.tum.de}
  \author{Andreas W\"urfl }
  \address{Zentrum Mathematik, Technische Universit\"at M\"unchen, 
    Boltzmannstra\ss{}e~3, D-85747 Garching bei M\"unchen, Germany} 
  \email{wuerfl@ma.tum.de}
  \thanks{ 
    The authors were partially supported by DFG grant TA 309/2-2.
    The first author was partially supported by FAPESP
    (Proc.~2009/17831-7), and is grateful to NUMEC/USP, N\'ucleo de Modelagem
    Estoc\'astica e Complexidade of the University of S\~ao Paulo, for
    supporting this research.
  }
\begin{document}


\begin{abstract}
  For $c \in [0,1]$ let $\cP_n(c)$ denote the set of $n$-vertex perfect
  graphs with density $c$ and $\cC_n(c)$ the set of $n$-vertex graphs
  without induced $C_5$ and with density $c$.  We show that $\log_2
  |\cP_n(c)|/\binom{n}{2}=\log_2 |\cC_n(c)|/\binom{n}{2}=h(c) + o(1)$ with
  $h(c)=\tfrac12$ if $\tfrac14\le c\le \tfrac34$ and $h(c)=\tfrac12
  H(|2c-1|)$ otherwise, where $H$ is the binary entropy function.

  Further, we use this result to deduce that almost all graphs in
  $\cC_n(c)$ have homogenous sets of linear size. This answers a question
  raised by Loebl, Reed, Scott, Thomason, and Thomass\'e\ [Almost all
  $H$-free graphs have the Erd\H{o}s-Hajnal property] in the case of
  forbidden induced $C_5$.
\end{abstract}

\maketitle
\let\languagename\relax   


\section{Introduction and Results}
\label{sec:intro}

In this paper we investigate classes of graphs that are defined by forbidding certain
substructures. Let $\cP$ be such a class. We focus on two related goals: to approximate the cardinality 
of $\cP$ and to determine the structure of a typical graph in $\cP$. 
In particular, we add the additional constraint that all graphs in $\cP$ must have the same density 
$c$ and would like to know how the answer to these questions depends on the parameter $c$.  

The quantity $|\cP_n|$ where $\cP_n:=\{G\in\cP: V(G)=[n]\}$ is also called
the \emph{speed} of~$\cP$.  Often exact formulas or good estimates for
$|\cP_n|$ are out of reach. In these cases, however, one might still ask
for the asymptotic behaviour of the speed of~$\cP$. One prominent result in
this direction was obtained by Erd\H{o}s, Frankl and
R\"odl~\cite{ErdFraRod} who considered properties $\Forb(F)$ defined by a
single forbidden (weak) subgraph $F$. They proved that for each graph~$F$
with $\chi(F)\ge 3$ the class $\Forb_n(F)$ of $n$-vertex graphs that do not
contain~$F$ as a subgraph satisfies $|\Forb_n(F)| = 2^{\ex(F,n)+o(n^2)}$
where $\ex(F,n):=(\chi(F)-2)\binom
n2/(\chi(F)-1)$. 
In other words, if $\chi(F)\ge 3$ then the speed of $\Forb(F)$
asymptotically only depends on the chromatic number of~$F$.

In this paper we are interested in features of the picture at a more fine
grained scale.
More precisely, we fix a density $0<c<1$ and are interested in the number
$|\cP_n(c)|$ of graphs on $n$ vertices with property~$\cP$ and density $c$.
Let $\Forb_n(F,c) = \Forb_n(F)\cap \cG_n(c)$ 
where $\cG_n(c)$ is the set of all graphs on vertex set $[n]$ with $c\binom n2$
edges. Straightforward modifications of the proof of the theorem of Erd\H{o}s,
Frankl and R\"odl~\cite{ErdFraRod} yield the following bounds for
$|\Forb_n(F,c)|$ (we will sketch this argument in Section~\ref{sec:subgraph}):
%
%
  Let $F$ be a graph with $\chi(F)=r$. For all $c\in (0,\tfrac{r-2}{r-1})$ we
  have 
  \begin{equation}\label{eq:subgraph}
  	\lim_{n \to \infty} \frac{\log |\Forb_n(F,c)|}{\binom{n}{2}} =
  	\tfrac{r-2}{r-1}\, H\Big(\tfrac{r-1}{r-2}c\Big) \, ,
  \end{equation}
  where $H(x)$ is the \emph{binary entropy} function, that is,
  for~$x\in(0,1)$ we set $H(x):=-x\log x-(1-x)\log(1-x)$. Here we denote by $\log$ the logarithm to base $2$.
%
Notice that $\lim_{n \to \infty} \log|\Forb_n(F,c)|/\binom{n}{2}=0$ for
$c\ge\tfrac{r-2}{r-1}$ by the theorem of Erd\H{o}s and Stone~\cite{ErdSto}.

The analogous problem for a graph class $\Forb^*(F)$, characterised by a
forbidden \emph{induced} subgraph~$F$, is more challenging and was first
considered by Pr\"omel and Steger~\cite{ProSte_indIII}. They specified a graph
parameter, the so-called colouring number $\chi^*(F)$ of~$F$, that serves as a
suitable replacement of the chromatic number in the theorem of Erd\H{o}s, Frankl
and R\"odl. More precisely, they showed that
$|\Forb^*_n(F)|=2^{\ex^*(F,n)+o(n^2)}$ with
$\ex^*(F,n):=\big(\chi^*(F)-2\big)\binom n2/\big(\chi^*(F)-1\big)$ where
$\chi^*(F)$ is defined as follows. A \emph{generalised $r$-colouring} of $F$ with
$r'\in[0,r]$ cliques is a partition of $V(F)$ into $r'$ cliques and $r-r'$
independent sets. The \emph{colouring number} $\chi^*(F)$ is the largest integer
$r+1$ such that there is an $r'\in[r]$ for which $F$ has no generalised
$r$-colouring with $r'$ cliques. For example, we have $\chi^*(C_5)=3$ and
$\chi^*(C_7)=4$.


This naturally extends to hereditary graph properties, i.e., classes of
graphs~$\cP$ which are closed under isomorphism and taking induced subgraphs (and
may therefore be characterised by possibly infinitely many forbidden induced
subgraphs). Let~$\cF(r,r')$ denote the family of all graphs that admit a
generalised $r$-colouring with $r'$ cliques. Then the colouring number of~$\cP$
is
\[ \chi^*(\PP) := \max\{r+1\colon \cF(r,r')\subset\cP\text{ for some } r'
\in[0,r] \}\;, \] 
and we set $\ex^*(\cP,n):=\big(\chi^*(\cP)-2\big)
\binom{n}{2}/\big(\chi^*(\cP)-1\big)$. Observe that this definition implies
$\chi^*\big(\Forb^*(F)\big)=\chi^*(F)$. And indeed Alekseev~\cite{Alek92}, and
Bollob\'as and Thomason~\cite{BT97} generalised the result of Pr\"omel and Steger
to arbitrary hereditary graph properties~$\cP$ and showed that
$|\PP_n|=2^{\ex^*(\cP,n)+ o(n^2)}$.

More precise estimates for the speed were given for monotone properties~$\cP$
(properties that are closed under isomorphisms and taking subgraphs) by Balogh,
Bollob\'as, and Simonovits~\cite{BBS04} who showed that
$2^{\ex^*(\cP,n)}\le|\cP_n|\le 2^{\ex^*(\cP,n)+cn\log n}$ for some constant~$c$,
and for hereditary properties~$\cP$ by Alon, Balogh, Bollob\'as, and Morris
\cite{ABBM09} who proved $2^{\ex^*(\cP,n)}\le|\cP_n|\le
2^{\ex^*(\cP,n)+n^{2-\eps}}$ for some $\eps=\eps(\cP)>0$ and~$n$ sufficiently
large. Pr\"omel and Steger~\cite{ProSte_C4,AlmostAllBergeGraphs} gave even more
precise results for the speed of $\Forb^*_n(C_4)$ and $\Forb^*_n(C_5)$ which they
determined up to a factor of $2^{O(n)}$. In fact, they showed
in~\cite{AlmostAllBergeGraphs} that almost all graphs in $\Forb^*_n(C_5)$ are
generalised split graphs, that is, graphs of a rather simple structure which are
defined as follows. We say that a graph $G=(V,E)$ admits a \emph{generalised
clique partition} if there is a partition $V=V_1\dcup\dots\dcup V_k$ of its
vertex set such that $G[V_i]$ is a clique and for $i>j>1$ we have
$e(V_i,V_j)=e(V_j,V_i)=0$. A graph~$G$ is a \emph{generalised split graph} if
$G$ or its complement admit a generalised clique partition.

It is illustrative to compare this result to the celebrated strong perfect
graph theorem~\cite{StrongPerfectGraphTheorem}. 
A graph~$G$ is \emph{perfect} if $\chi(G')$ equals the clique number
$\omega(G')$ for all induced subgraphs~$G'$ of~$G$. The strong perfect graph
theorem asserts that \emph{all} graphs without induced copies of odd cycles
$C_{2i+1}$, $i>1$ and without induced copies of their complements $\overline{
C_{2i+1}}$ are perfect. 
Using this characterisation, it is easy to see that generalised split
graphs are perfect. Consequently the result of Pr\"omel and Steger
implies that already \emph{almost all} graphs without induced $C_5$ are perfect
(observe that $C_5$ is self-complementary).

In this paper, we consider induced $C_5$-free graphs of density $c$
and provide bounds for their number. 
In the spirit of the result by Pr\"omel and Steger we also relate this quantity
to the number of $n$-vertex perfect graphs and generalised split graphs with
density~$c$.

\begin{definition}
  We define the following graph classes:
  \begin{align*}
    \cC(n,c) &:=\Forb^*_n(C_5,c):= \Forb^*_n(C_5)\cap \cG_n(c) \,, \\  
    \cP(n,c) &:=\big\{G\in\cG_n(c)\colon G \text{ is perfect}\big\}\,, \\
    \cS(n,c) &:=\big\{G\in\cG_n(c)\colon G \text{ is a generalised split
       graph}\big\}\,.
  \end{align*}
\end{definition}

Observe that for all~$n$ and~$c\in[0,1]$ we have
$\cS(n,c)\subset\cP(n,c)\subset\cC(n,c)$. Our first main result now bounds the
multiplicative error term between $|\cS(n,c)|$ and $|\cC(n,c)|$. In order to
state this we define the following function.
Let
\begin{align}
\label{eq:h}
  h(c) := 
  \begin{cases} 
    H(2c)/2 & \text{ if } 0<c<\tfrac14\,, \\
    1/2 & \text{ if } \tfrac14\le c\le \tfrac34\,, \\
    H(2c-1)/2 & \text{ otherwise\,.}
  \end{cases}
\end{align}
Note that the classes of all generalised split graphs, all perfect graphs,
and all graphs without induced $C_5$ are closed under taking complements.
Hence, e.g., $|\cC(n,c)|=|\cC(n,1-c)|$ for all $c\in (0,1)$ and $h$ 
is in fact symmetric in $(0,1)$. 
Further note that $H(|2c-1|)/2=h(c)$ for $c<1/4$ or $c>3/4$.

\begin{theorem}
\label{thm:main}
  For all~$c \in (0,1)$ we have
  \[ 
  \lim_{n \to \infty} \frac{\log_2 |\cC(n,c)|}{\binom{n}{2}} =
  \lim_{n \to \infty} \frac{\log_2 |\cP(n,c)|}{\binom{n}{2}} =
  \lim_{n \to \infty} \frac{\log_2 |\cS(n,c)|}{\binom{n}{2}} = h(c)
  \,.
  \]
\end{theorem}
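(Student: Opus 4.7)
Since $\cS(n,c)\subseteq\cP(n,c)\subseteq\cC(n,c)$, we have $|\cS(n,c)|\le|\cP(n,c)|\le|\cC(n,c)|$, so it suffices to prove the matching bounds $|\cS(n,c)|\ge 2^{h(c)\binom{n}{2}-o(n^2)}$ and $|\cC(n,c)|\le 2^{h(c)\binom{n}{2}+o(n^2)}$. Since all three classes are closed under complement and $h(c)=h(1-c)$, we may assume $c\le\tfrac12$.

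For the lower bound I exhibit two families of generalised split graphs of density~$c$. When $c\le\tfrac14$, fix a bipartition $V=V_1\dcup V_2$ with $|V_1|=|V_2|=n/2$ and consider all labelled bipartite graphs $G$ between $V_1$ and $V_2$ with exactly $c\binom{n}{2}$ edges. The complement $\bar G$ then has $V_1,V_2$ as cliques joined by an arbitrary bipartite part, which is a generalised clique partition with $k=2$; hence $\bar G$, and therefore $G$, belongs to $\cS(n,c)$. Their number is $\binom{n^2/4}{c\binom{n}{2}}=2^{(n^2/4)H(2c)+o(n^2)}=2^{h(c)\binom{n}{2}+o(n^2)}$. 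When $c\in[\tfrac14,\tfrac12]$ I apply the analogue in $\bar G$: with the same bipartition let $G[V_1]$ be a clique, equip $V_2$ with any cluster-graph structure contributing $c\binom{n}{2}-\binom{n/2}{2}-n^2/8\in[0,\binom{n/2}{2}]$ internal edges, and draw $n^2/8$ bipartite edges between $V_1$ and $V_2$ arbitrarily. This produces $\binom{n^2/4}{n^2/8}=2^{\binom{n}{2}/2+o(n^2)}=2^{h(c)\binom{n}{2}+o(n^2)}$ generalised split graphs of density~$c$.

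For the upper bound I first derive an entropy bound on $|\cS(n,c')|$ for every $c'$. Every generalised split graph is, up to complementation, determined by the size $\alpha n$ of the distinguished clique $V_1$, a cluster partition of $V\setminus V_1$ with total internal-edge count $\gamma\binom{n}{2}$, and the bipartite graph between $V_1$ and $V\setminus V_1$, whose edge count $m$ is pinned down by the density. The number of admissible bipartite graphs is $\binom{|V_1|(n-|V_1|)}{m}=2^{2\alpha(1-\alpha)H(\mu)\binom{n}{2}+o(n^2)}$ with $\mu=(c'-\alpha^2-\gamma)/(2\alpha(1-\alpha))$, while the choice of partition costs only $2^{O(n\log n)}$. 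Optimising $2\alpha(1-\alpha)H(\mu)$ over the feasible region $\gamma\in[\max(0,c'-2\alpha+\alpha^2),\min((1-\alpha)^2,c'-\alpha^2)]$, combined with the symmetric analysis for graphs whose complement is generalised split, yields overall maximum $h(c')$. To conclude, I reduce $\cC$ to $\cS$ up to a $2^{o(n^2)}$ factor using a quantitative form of Pr\"omel and Steger's structure theorem from~\cite{AlmostAllBergeGraphs}: every $G\in\Forb^*_n(C_5)$ is within $o(n^2)$ edge modifications of some generalised split graph. Each $G\in\cC(n,c)$ is then near some generalised split graph $G'$ on $[n]$ with density in $[c-o(1),c+o(1)]$, so
\[
|\cC(n,c)|\le 2^{o(n^2)}\,\max_{|c'-c|\le o(1)}|\cS(n,c')|\le 2^{(h(c)+o(1))\binom{n}{2}}
\]
by the entropy bound and the continuity of~$h$.

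The principal obstacle is the quantitative structure statement in the last step: upgrading Pr\"omel and Steger's ``almost all $C_5$-free graphs are generalised split'' to a uniform $o(n^2)$ edit-distance bound. This requires Szemer\'edi's regularity lemma together with an analysis showing that in the regularity partition of any $C_5$-free graph almost all inter-cluster densities are close to $0$ or~$1$; once the pairs are thus polarised, snapping the graph onto a generalised split graph by $o(n^2)$ edge changes becomes tractable.
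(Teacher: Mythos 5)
Your lower bound is essentially the paper's construction, but note a small arithmetic slip in the middle range: for $c$ only slightly above $\tfrac14$ the budget $c\binom n2-\binom{n/2}{2}-n^2/8$ is negative, so you cannot simultaneously afford a clique on $n/2$ vertices and $n^2/8$ free bipartite edges; the paper fixes this by shrinking the clique-side structure (choosing $k$ in \eqref{eq:lower}) so that the number of free bipartite slots stays $n^2/4$ while the forced edges land at $\approx n^2/8$. This is repairable. The upper bound, however, has a genuine gap: the ``quantitative form of Pr\"omel and Steger'' you invoke --- that \emph{every} $G\in\Forb^*_n(C_5)$ is within $o(n^2)$ edge modifications of a generalised split graph --- is not their theorem (theirs is an ``almost all'' statement) and is in fact false. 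Take $B$ to be the disjoint union of two copies of $K_{n/8,n/8}$ and let $G=B\vee B'$ be the join of two disjoint copies of $B$. Then $G$ is a cograph, hence perfect and induced-$C_5$-free, with density bounded away from $0$ and $1$. Every vertex set of size $\alpha n$ in $G$ contains $\Omega(\alpha^2n^2)$ non-edges (it meets some independent class of $B$ in $\ge \alpha n/8$ vertices), so in any generalised clique partition reached by $o(n^2)$ edits all parts have size $o(n)$; but then almost all of the $\ge n^2/4$ join edges run between distinct non-special parts, which is forbidden. The complement $\BAR{G}$ is the disjoint union of two co-bipartite ``inflated $C_4$'' graphs, each of which needs its own special part, and a similar argument shows $\BAR{G}$ is also $\Omega(n^2)$-far from admitting a generalised clique partition. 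So $G$ has edit distance $\Omega(n^2)$ from $\cS$, and your reduction of $|\cC(n,c)|$ to $|\cS(n,c')|$ collapses.

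The regularity-based repair you sketch for this step is also not the right statement: it is \emph{not} true that in a $C_5$-free graph almost all inter-cluster densities are near $0$ or $1$. For instance, every bipartite graph is induced-$C_5$-free, and a bipartite graph whose bipartite part is quasirandom of density $\tfrac12$ has a constant fraction of regular pairs of density $\approx\tfrac12$; indeed, if densities were polarised you could not account for the $2^{(h(c)-o(1))\binom n2}$ graphs your own lower bound produces. The correct structural fact, and the heart of the paper's proof, is weaker: after colouring the edges of the reduced graph white/grey/black according to density and colouring clusters by Lemma~\ref{lem:VertexColour}, a grey \emph{triangle} forces an induced $C_5$ via the coloured-homomorphism embedding lemma (Lemma~\ref{lem:GreyTriangle}). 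Tur\'an's theorem then caps the number of grey pairs at $k^2/4$, i.e.\ at $n^2/4$ vertex pairs, and distributing $c\binom n2$ edges into $n^2/4$ slots is exactly what yields $h(c)$ in Lemma~\ref{lem:Counting} --- no approximation of $\cC(n,c)$ by $\cS(n,c)$ is needed or used. As it stands, your argument proves the (correct and matching) lower bound but not the upper bound.
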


The proof of this theorem uses Szemer\'edi's regularity lemma and is given in
Section~\ref{sec:count}.

We remark that Bollob\'as and Thomason~\cite{BolTho00} studied related
questions of a more general type (see also the references
in~\cite{BolTho00} for earlier results in this direction).  They were
interested in the probability $\Prob_\cP := \Prob[\cG(n,p) \in \cP]$
of an arbitrary hereditary property~$\cP$ in the probability space
$\cG(n,p)$ and showed that for any~$\cP$ there are very simple
properties~$\cP^*$ which closely approximate~$\cP$ in the probability space
$\cG(n,p)$. 
In this context, our Theorem~\ref{thm:main} estimates the probability of
$\cP=\Forb^*_n(C_5)$ in the probability space $\cG(n,m)$ with
$m=c\binom{n}{2}$ and states that $\cP=\Forb^*_n(C_5)$ is
approximated by the property~$\cP^*$ of being a generalised split graph in
$\cG(n,m)$.  
The actual value of the probability~$\Prob_\cP$ was estimated by
Marchant and Thomason in~\cite{MarTho_note} for $\cP=\Forb^*_n(K_{3,3})$ and
$\cP=\Forb^*_n(C^*_6)$, where $C^*_6$ is the six-cycle with a single
diagonal added. They also pointed out the relevance of extremal
properties of coloured multigraphs to problems of this kind (see
also~\cite{MarTho_extcol}). It seems that $\Prob[\cG(n,p) \in
\cP]$ and $\Prob[\cG(n,m=p\binom n2) \in \cP]$ are closely related, at least
for certain properties $\cP$.

%

\medskip
Let us now move from the question of approximating cardinalities to determining
the structure of a typical element in $\Forb_n^*(C_5)$. A well-known conjecture
by Erd\H{o}s and Hajnal \cite{RamseyTypeTheorems} states
that any family of graphs that does not contain a certain fixed graph $H$ as an
induced subgraph must contain a homogenous set whose size is polynomial in the
number of vertices. 

The conjecture is known to be true for certain graphs $H$,
but open, among others, for $H=C_5$ (see \cite{ReflectionsonErdoesHajnal}).
However, Loebl, Reed, Scott, Thomason, and Thomass\'e~\cite{LoeReeScoThoTho} 
recently showed that for any graph $H$ \emph{almost all graphs} in $\Forb_n(H)$ have a
polynomially sized homogenous set. Moreover, they ask for which graphs $H$ it is true that
almost all graphs in $\Forb_n(H)$ do indeed have a \emph{linearly} sized
homogenous set.

It may seem at first sight that 
our estimates derived in Theorem~\ref{thm:main}, carrying an $o(n^2)$ term
in the exponent, are too rough to tell us something about the structure of
almost all graphs in $\Forb_n^*(C_5)$ or $\Forb_n^*(C_5,c)$.
However, we can combine them 
with the ideas of~\cite{LoeReeScoThoTho} 
to answer the question of Loebl, Reed, Scott, Thomason, and Thomass\'e
in the affirmative for the case
$H=C_5$. In fact, we can prove this assertion even in the case where we, again, 
restrict the class to graphs with a given density.



\begin{theorem}\label{thm:C5LinHom}
For $\eta>0$ denote by $\Forb^*_{n,\eta}(F,c)$ the set of graphs
$G\in\Forb^*_n(F,c)$ with $\hom(G)< \eta n$. Then for every $0<c<1$ there
exists $\eta>0$ such that
\[
	\frac{|\Forb^*_{n,\eta}(C_5,c)|}{|\Forb^*_n(C_5,c)|} \to 0 \quad (n\to
	\infty). 
\]
\end{theorem}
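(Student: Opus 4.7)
We plan to exhibit $\eta,\delta>0$ depending on~$c$ such that
\[
|\Forb^*_{n,\eta}(C_5,c)|\le 2^{(h(c)-\delta)\binom{n}{2}}
\]
for all sufficiently large~$n$. Combined with the lower bound $|\Forb^*_n(C_5,c)|\ge 2^{h(c)\binom{n}{2}-o(n^2)}$ from Theorem~\ref{thm:main}, the ratio in the theorem is then at most $2^{-\delta\binom{n}{2}+o(n^2)}\to 0$.

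As a baseline, every generalised split graph $G\in\cS(n,c)$ satisfies $\hom(G)\ge\eta_0 n$ with $\eta_0:=\min(c,1-c)/5$. Indeed, at least one of $G$ and $\overline G$ admits a generalised clique partition $V_1\dcup\cdots\dcup V_k$; analyse that graph, with density $c'\in\{c,1-c\}$. Writing $n_i=|V_i|$ and $M=\max_{i\ge 2}n_i$, if every $|V_i|<\eta_0 n$ then
\[
c'\binom{n}{2}\le\binom{n_1}{2}+\tfrac12 M(n-n_1)+n_1(n-n_1)\le 2\eta_0 n^2<\tfrac{c'}{2}n^2,
\]
which is a contradiction for large~$n$ (since $\eta_0<c'/4$). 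Hence some $V_i$ is a clique of size $\ge\eta_0 n$ in the analysed graph, giving a clique or independent set of size $\eta_0 n$ in~$G$.

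For the upper bound on $|\Forb^*_{n,\eta}(C_5,c)|$ I would follow the approach of Loebl, Reed, Scott, Thomason, and Thomass\'e~\cite{LoeReeScoThoTho} combined with a quantitative refinement of the entropy estimates underlying the proof of Theorem~\ref{thm:main}. For $G\in\Forb^*_{n,\eta}(C_5,c)$, apply Szemer\'edi's regularity lemma with $\eps\ll\eta$ and form the reduced graph~$R$. The hypothesis $\hom(G)<\eta n$ restricts~$R$: the reduced graph cannot exhibit a large-scale generalised split pattern, because such a pattern (via regularity and the baseline argument applied at the cluster-partition level) would produce a homogeneous set of size $\ge\eta n$ in~$G$. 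The $C_5$-free condition together with density~$c$ then forces any~$R$ that realises the maximal log-count $h(c)\binom{n}{2}$ to look like the reduced graph of a generalised split graph, so any deviation of this kind should yield a definite entropy loss, bounding $|\Forb^*_{n,\eta}(C_5,c)|$ by $2^{(h(c)-\delta)\binom{n}{2}}$ for some $\delta>0$.

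The main obstacle is making this quantitative stability precise: the proof of Theorem~\ref{thm:main} controls the log count only up to $o(n^2)$, whereas the argument above requires that every fixed deviation from the extremal generalised split pattern produces an entropy loss of $\Omega(n^2)$. An additional subtlety is the \emph{lift} from a near-extremal reduced graph back to an actual large homogeneous set in~$G$: a single cluster of internal density close to~$1$ does not alone contain a linear clique (Tur\'an's theorem yields only a constant-size clique for fixed $1-\eps>0$), so one must work at the reduced-graph level to locate a generalised split structure there, and then use that structure directly via super-regularity or by iterating the baseline on a cluster partition of~$G$.
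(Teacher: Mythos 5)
Your top-level plan coincides with the paper's: bound $|\Forb^*_{n,\eta}(C_5,c)|$ by $2^{(h(c)-\delta)\binom n2}$ for some $\delta=\delta(c)>0$ and compare with the lower bound $2^{h(c)\binom n2-o(n^2)}$ coming from generalised split graphs. But the heart of the argument --- where the $\Omega(n^2)$ entropy loss actually comes from --- is missing, and the route you propose for it would not work. Your idea is that $\hom(G)<\eta n$ forces the reduced graph $R$ to deviate from a ``generalised split pattern,'' and that such a deviation costs entropy. The difficulty is not merely the quantitative stability you flag: the type/reduced graph simply cannot detect $\hom(G)$. A graph whose clusters are dense (or sparse) pseudorandom graphs has exactly the same type as a generalised split graph, yet its clusters contain only logarithmic homogeneous sets; conversely, the types that carry essentially all of the entropy $h(c)$ (those with $\Theta(k^2)$ grey, i.e.\ medium-density, pairs, the grey edges forming a triangle-free graph) are compatible both with graphs of linear hom and with graphs of tiny hom. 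So the event you need to penalize is not ``$R$ deviates from the extremal pattern''; it is, \emph{within a fixed extremal type}, the edge-distributions consistent with small hom. Your suggested fixes (super-regularity, iterating the baseline on a cluster partition) do not address this, and the baseline observation that generalised split graphs themselves have linear homogeneous sets, while essentially correct, is neither needed nor sufficient: it says nothing about typical $C_5$-free graphs.

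The paper obtains the loss by a different, local mechanism. First, Lemma~\ref{lem:P3orNotP3} shows that any cluster of a graph with $\hom(G)<\eta n$ contains linearly many vertex-disjoint induced copies of $P_3$ or of $\antiP$. Second, Lemma~\ref{lem:C5Probability} shows that for a grey (medium-density) regular pair whose two clusters both carry such families, most ways of inserting the prescribed number of cross edges create an induced $C_5$ (each ``dangerous pair,'' one $P_3$ or $\antiP$ on each side, is completed to an induced $C_5$ with probability at least $p^4(1-p)^4$ in the binomial model, and there are $\Omega(n^2)$ disjoint dangerous pairs); this yields a per-pair entropy deficit $r(c)>0$ relative to the trivial $\binom{n^2}{2cn^2}$ count. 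Summing over the at most $k^2/4$ grey pairs via concavity (Lemma~\ref{lem:NoLinHomSet}) replaces the bound $2^{h(c)\binom n2+o(n^2)}$ of Lemma~\ref{lem:Counting} by $2^{(h(c)-r(c))\binom n2+o(n^2)}$, and choosing $2\gamma<r(c)$ finishes the proof. Since types with few grey pairs contribute far fewer than $2^{h(c)\binom n2}$ graphs anyway, this local penalty is exactly what closes the gap that your reduced-graph stability approach leaves open; without an ingredient of this kind your proposal does not yield the theorem.
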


We provide the proof of this theorem in Section~\ref{sec:linear}.

Similar statements as in Theorem~\ref{thm:main} and~\ref{thm:C5LinHom}, for
forbidden graphs $F$ other than $C_5$, seem to require more work.

\section{The proof of Theorem~\ref{thm:main}}
\label{sec:count}

In this section we prove Theorem~\ref{thm:main}. In Section~\ref{sec:lower}
we start with the lower bound, by estimating the number of generalised
split graphs with a given density. 
For the upper bound we need some preparations: We shall apply
Szemer\'edi's regularity lemma, which is introduced in
Section~\ref{sec:reg}. In Section~\ref{sec:subgraph} we illustrate how
this lemma can be used for counting graphs without a fixed (not necessarily
induced) subgraph. In Section~\ref{sec:emb} we explain how to modify these
ideas in order to deal with forbidden induced subgraphs. In
Section~\ref{sec:upper}, finally, we prove the upper bound of
Theorem~\ref{thm:main}.

\subsection{The lower bound of Theorem~\ref{thm:main}}
\label{sec:lower}

In this section we estimate the number of generalised split graphs with
density~$c$ and prove the following lemma which constitutes the lower bound of
Theorem~\ref{thm:main}.

\begin{lemma}
\label{lem:lower}
  For all $c,\gamma\in(0,1)$ there is~$n_0$ such that for all
  $n\ge n_0$ we have 
  \[ |\cS(n,c)|\ge 2^{h(c)\binom{n}{2}-\gamma \binom n2}\,. \]
\end{lemma}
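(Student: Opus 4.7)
My plan is to exhibit, for each density $c$, an explicit family of generalised split graphs of density $c$ whose cardinality meets the stated lower bound. Since $\cS(n,c)$ is closed under complementation (so $|\cS(n,c)| = |\cS(n,1-c)|$) and $h(c) = h(1-c)$, I may restrict attention to $c \leq 1/2$, and split into the two subcases $c \leq 1/4$ and $c \in (1/4, 1/2]$ dictated by the piecewise definition of $h$.

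For $c \leq 1/4$, balanced bipartite graphs suffice. I would fix a bipartition $V = A \dcup B$ with $|A| = \lceil n/2 \rceil$, $|B| = \lfloor n/2 \rfloor$, and consider all graphs obtained by choosing $m := \lfloor c\binom{n}{2}\rfloor$ edges inside $A \times B$. Every such graph is bipartite and therefore lies in $\cS(n,c)$: its complement admits the generalised clique partition $V_1 = A$, $V_2 = B$ (each is a clique, and the condition ``$i>j>1$'' is vacuous). Stirling's formula gives $\binom{|A||B|}{m} = 2^{H(m/(|A||B|))\,|A||B| + o(n^2)} = 2^{H(2c) n^2/4 + o(n^2)} = 2^{h(c)\binom n2 + o(n^2)}$, which exceeds $2^{h(c)\binom n2 - \gamma\binom n2}$ for $n$ large.

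For $c \in (1/4, 1/2]$ we have $h(c) = 1/2$, and bipartite graphs alone fall short: on the $\approx n^2/4$ edge slots of a balanced bipartition, the binomial count peaks at density exactly $1/4$. I would therefore use a ``complete multipartite plus free bipartite'' construction. Fix a partition $V = U_0 \dcup W_1 \dcup \dots \dcup W_k$ with $|U_0| = \lfloor n/2 \rfloor$ and the $W_j$ of integer sizes chosen so that $\sum_j |W_j| = n - |U_0|$ and $\sum_j |W_j|^2 = (1/2 - c)n^2 + O(n)$ (for instance $k \approx 1/(2(1-2c))$ roughly-equal parts of size $\approx (1-2c)n$, degenerating to singletons as $c$ approaches $1/2$). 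Consider graphs $G$ in which $U_0$ and each $W_j$ are independent, all edges between distinct $W_i, W_j$ are present, and the edges between $U_0$ and $V \setminus U_0$ form an arbitrary set $E'$. Such a $G$ lies in $\cS(n,c)$ because $\overline{G}$ admits the generalised clique partition $U_0, W_1, \ldots, W_k$. A direct edge-count shows that the forced complete-multipartite edges total $\approx (c - 1/4)n^2/2$, so the target density forces $|E'| = n^2/8 + O(n)$ out of the $|U_0|(n - |U_0|) \approx n^2/4$ available slots; the number of such graphs is $\binom{|U_0|(n-|U_0|)}{|E'|} = 2^{n^2/4 + o(n^2)} = 2^{h(c)\binom n2 + o(n^2)}$.

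The main technical obstacle I anticipate is the integer bookkeeping: choosing the $|W_j|$ so that $\sum_j |W_j|^2$ hits $(1/2 - c)n^2$ within $O(n)$, and then adjusting $|E'|$ so that the total edge count equals $\lfloor c\binom n2\rfloor$ exactly. Both should be routine: swapping the sizes of a bounded number of parts changes $\sum_j |W_j|^2$ by $O(n)$ per swap, so the target is reachable up to $O(n)$ error, and the corresponding $O(n)$ shift in $|E'|$ perturbs $\binom{n^2/4}{|E'|}$ by only a factor of $2^{o(n^2)}$, which is absorbed by the $\gamma \binom n 2$ slack.
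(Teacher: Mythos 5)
Your proof is correct and takes essentially the same route as the paper: bipartite graphs for $c\le\tfrac14$, and for $\tfrac14<c\le\tfrac12$ an independent set on half the vertices joined by roughly $n^2/8$ freely chosen edges to a complete multipartite graph on the other half, with the standard entropy bound for binomial coefficients supplying the count. The only difference is cosmetic: the paper realises the forced-edge budget with one large part plus singletons (tuning the number of parts $k$), whereas you use roughly equal parts and tune their sizes, which changes nothing essential.
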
 

We will use the following bound for binomial coefficients (see, e.g.,
\cite{Juk01}).
For every $\gamma>0$ there exists $n_0$ such that for every integer $m\ge n_0$
and for every real $c\in (0,1)$ we have
\begin{align}\label{eq:BinCoe}
	2^{m H(c) - \gamma m} \le \binom m{cm} \le 2^{m H(c)}\; .
\end{align}
We call the term $-\gamma m$ in the first exponent the \emph{error term of
Equation~\eqref{eq:BinCoe}}.

\begin{proof}[Proof of Lemma~\ref{lem:lower}]
We prove this lower bound by constructing an adequate number of generalised
split graphs. 
Choose $n_0$ sufficiently large such that~\eqref{eq:BinCoe} holds
for $m = \tfrac 12 \binom n2$ and error term $\gamma m$. 
Observe that it suffices to prove the lemma for $c\le\frac12$, since
the complement of a split graph with density~$c$ is a split graph with
density $(1-c)$. 

We distinguish two cases. First, assume $c \le \tfrac 14$.
To obtain a lower bound for $|\cS(n,c)|$ in this case, we simply count
bipartite graphs with density~$c$ and with colour classes of size $n/2$.
There are 
\[
	\binom{\frac{n^2}4}{c\binom n2} \ge \binom{\frac12\binom n2}{c\binom n2} \ge
	2^{\frac 12 \binom n2 H(2c) - \gamma \binom n2}
\]
such graphs. 

Now assume that $\tfrac14<c\le\tfrac12$. In this case we construct suitable
$k$-partite graphs. For this purpose choose~$k$ such that
\begin{equation}\label{eq:lower}
	x:= c\binom n2-(k-2)(\tfrac n2-k+2)-\binom{k-2}{2} \in
	\Big[\frac{\,n^2}8-n,\frac{\,n^2}8+n\Big] \,.
\end{equation}
Now, construct~$k$ (independent) vertex sets $V_1,\dots,V_k$ with $|V_1| = \tfrac n2$,
$|V_2|=\tfrac n2-k+2$ and $|V_i|=1$ for $i=\{3,\dots,k\}$ and insert all
edges between $V_i$ and $V_j$ with $i,j \in[k]\setminus \{1\}$, $i\neq j$.
Call the resulting graph~$G_0$. By~\eqref{eq:lower} we obtain a generalised
split graph with density~$c$ from~$G_0$, if we insert~$x$ edges between~$V_1$
and~$V_2\cup\dots\cup V_k$. Since this can be done in at least
\[
    \binom{\tfrac{n^2}4}{\tfrac{n^2}{8}-n} \ge 2^{\tfrac 12 \binom n2 - \gamma
      \binom n2}
\]
ways, we obtain at least $2^{\tfrac 12 \binom n2 - \gamma\binom n2}$
generalised split graphs with exactly $c\binom n2$ edges.
\end{proof}

\subsection{Regularity}
\label{sec:reg} 

In order to prove the upper bound from Theorem~\ref{thm:main}, i.e.,
$$|\cC(n,c)|\le 2^{h(c)\binom n2 + \gamma \binom n2},$$ we will analyse the
structure of graphs in $\cC(n,c)$ by applying a variant of the regularity lemma suitable for our purposes.

Let $G=(V,E)$ be a graph. For disjoint nonempty vertex sets $A,B\subset V$ the
\emph{density} $d(A,B):=e(A,B)/(|A||B|)$ of the pair $(A,B)$ is the number of
edges that run between $A$ and $B$ divided by the number of possible edges
between $A$ and $B$. In the following let $\eps,d\in[0,1]$.
The pair $(A,B)$ is \emph{$\eps$-regular}, if for all $A' \subset A$ and $B'
\subset B$ with $|A'|\geq\eps|A|$ and $|B'|\geq\eps|B|$ it is true that
$|d(A,B)-d(A',B')|\le\eps$. An $\eps$-regular pair $(A,B)$ is called
\emph{$(\eps,d)$-regular}, if it has density at least~$d$.

A partition
$V_0\dcup V_1 \dcup\dots\dcup V_k$ of $V$ is an \emph{equipartition} if
$|V_i|=|V_j|$ for all $i,j\in[k]$. 
An \emph{($\eps,d$)-regular partition} of~$G$ with \emph{reduced graph}
$R=(V_R,E_R)$ is an equipartition $V_0\dcup V_1 \dcup\dots\dcup V_k$ of $V$ with
$|V_0|\le\eps|V|$, and $V_R=[k]$ such that $(V_i,V_j)$ is an $(\eps,d)$-regular
pair in $G$ iff $\{i,j\}\in E_R$. In this case we also call~$R$ an
$(\eps,d)$-reduced graph of~$G$.
An $(\eps,0)$-regular partition $V_0\dcup V_1 \dcup\dots\dcup V_k$
which has at most $\eps\binom{k}{2}$ pairs that are not $\eps$-regular is
also called \emph{$\eps$-regular partition}.
The partition classes $V_i$ with $i\in[k]$ are called \emph{clusters} of
$G$ and $V_0$ is the \emph{exceptional set}.  

With this terminology at hand we can state the celebrated regularity lemma
of Szemer\'edi.

\begin{lemma}[regularity lemma,~\cite{Szem}]\label{reg:lem:reg}
  For all $\eps>0$ and $k_0$ there is $k_1$ such that
  every graph $G=(V,E)$ on $n\ge k_1$ vertices has an 
  $\eps$-regular partition
  $V=V_0\dcup V_1\dcup\dots\dcup V_k$ with 
  $k_0\le k\le k_1$.
\qed
\end{lemma}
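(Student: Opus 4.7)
The plan is to follow the standard energy-increment argument. First I would define the \emph{index} of a partition $P = \{V_1, \dots, V_k\}$ of $V(G)$ by
\[
\operatorname{ind}(P) := \sum_{1 \le i < j \le k} \frac{|V_i|\,|V_j|}{n^2}\, d(V_i, V_j)^2,
\]
which takes values in $[0, 1/2]$ since all densities lie in $[0,1]$. A short Cauchy--Schwarz computation shows that refining any partition never decreases the index.

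The crucial input is a defect inequality: if $(A, B)$ is a pair of equisized clusters that fails to be $\eps$-regular, witnessed by subsets $A' \subseteq A$ and $B' \subseteq B$ of relative size at least $\eps$ with $|d(A,B) - d(A', B')| > \eps$, then splitting $A$ into $\{A', A \setminus A'\}$ and $B$ into $\{B', B \setminus B'\}$ increases the contribution of the pair to the index by at least $\eps^4 |A||B|/n^2$. This again follows from convexity.

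With these tools in hand I would iterate. Start with an arbitrary equipartition into $k_0$ clusters (plus a small exceptional set absorbing divisibility errors). If the current equipartition is already $\eps$-regular we are done; otherwise more than $\eps \binom{k}{2}$ pairs are irregular, and simultaneously refining every such pair with its witnessing subsets produces a common refinement whose index has grown by at least an additive $\eps^5/2$. Since the index is bounded above by $1/2$, the iteration must halt after at most $1/\eps^5$ steps, yielding an $\eps$-regular equipartition.

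The main obstacle is that these refinements are not automatically equipartitions: after each refinement step one must chop every cluster into pieces of a common size $m$ and sweep the leftover vertices into the exceptional set $V_0$. Taking $m$ small enough (roughly $m \le n/(k\,4^k)$) keeps $|V_0| \le \eps n$ throughout and costs only a negligible fraction of the index gain, so the argument still goes through. The number of clusters is multiplied by at most a factor of $4^k$ at each step, and with a bounded number of iterations this produces an explicit (tower-type) bound $k_1 = k_1(\eps, k_0)$ on the final number of clusters.
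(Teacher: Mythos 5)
The paper does not prove this lemma at all: it is quoted as Szemer\'edi's regularity lemma and cited to \cite{Szem}, so there is no in-paper argument to compare against. Your sketch is the standard energy-increment (index/defect Cauchy--Schwarz) proof, and its outline is correct: index bounded by $1/2$, refinement never decreases it, an irregular pair boosts its contribution by at least $\eps^4|A||B|/n^2$, so a partition with more than $\eps\binom{k}{2}$ irregular pairs can be refined with an index gain of order $\eps^5$, forcing termination after $O(\eps^{-5})$ rounds and giving a tower-type $k_1(\eps,k_0)$. The one point stated too loosely is the control of the exceptional set: choosing the common piece size $m$ small does not by itself keep $|V_0|\le\eps n$; the standard bookkeeping is that each round sends roughly $n/2^{k}$ vertices to $V_0$ (a bounded number of leftovers per atom of the witness refinement), and since the number of rounds is at most about $\eps^{-5}$ while $k$ grows rapidly, one must start from an initial equipartition with enough parts (i.e.\ effectively replace $k_0$ by $\max\{k_0,\lceil\log_2(\eps^{-6})\rceil\}$ or similar, which is harmless since only a lower bound on $k$ is required) so that the accumulated loss stays below $\eps n$. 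With that standard adjustment your argument is the usual proof of the cited lemma.
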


The strength of this lemma becomes apparent when it is complemented with
corresponding embedding lemmas, such as the following (see, e.g.,
\cite{RegularityLemmaApplications}). A \emph{homomorphism} from a graph
$H=(V_H,E_H)$ to a graph $R=(V_R,E_R)$ is an edge-preserving mapping from
$V_H$ to $V_R$.

\begin{lemma}[embedding lemma]
\label{reg:lem:emb}
For every $d>0$ and every integer $k$ there exists $\eps>0$ 
with the following property. Let $H$ be a graph on $k$ vertices
$v_1,\dots,v_k$. Let~$G$ be a graph.  Let
$V_1,\dots,V_k$ be clusters of an $(\eps,d)$-regular partition of~$G$ with
reduced graph $R = ([k],E_R)$. If there is a homomorphism from $H$ to $R$,
then $H$ is a subgraph of~$G$.  \qed
\end{lemma}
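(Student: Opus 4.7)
The plan is to find $H$ as a subgraph of $G$ by a standard greedy embedding: process the vertices of $H$ one at a time and, at each step, use regularity to show that a suitable image is available in the cluster prescribed by the homomorphism.

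Let $\phi\colon V(H)\to V(R)=[k]$ be the given homomorphism, and observe that since $R$ carries no loops, $\phi$ sends any two adjacent vertices of $H$ to distinct clusters. I would choose $\eps$ small enough in terms of $d$ and $k$ that $\eps\le d/2$ and $(d-\eps)^k>k\eps+k/m$ whenever $m$ is large; for instance $\eps:=(d/2)^{k+1}/(2k+1)$ works. Write $m:=|V_1|$ for the common cluster size. The aim is to construct an injective map $\psi\colon V(H)\to V(G)$ with $\psi(v_i)\in V_{\phi(v_i)}$ and $\psi(v_i)\psi(v_j)\in E(G)$ for every $v_iv_j\in E(H)$.

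Process the vertices $v_1,\dots,v_k$ in order. After step $i$, when $\psi(v_1),\dots,\psi(v_i)$ have been chosen, maintain for each $j>i$ the candidate set
\[
  C_j^{(i)}:=\bigl\{u\in V_{\phi(v_j)}\colon u\psi(v_\ell)\in E(G)\text{ for every }\ell\le i\text{ with }v_\ell v_j\in E(H)\bigr\}.
\]
The invariant I would maintain is $|C_j^{(i)}|\ge (d-\eps)^k m$. The key tool is the standard ``slicing'' consequence of $\eps$-regularity: if $(A,B)$ is $(\eps,d)$-regular and $B'\subseteq B$ satisfies $|B'|\ge\eps|B|$, then all but at most $\eps|A|$ vertices $a\in A$ have $|N_G(a)\cap B'|\ge(d-\eps)|B'|$; otherwise the set of ``bad'' vertices would witness a density strictly below $d-\eps$, contradicting regularity. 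When choosing $\psi(v_{i+1})\in C_{i+1}^{(i)}$, for every neighbour $v_j$ of $v_{i+1}$ with $j>i+1$ the pair $(V_{\phi(v_{i+1})},V_{\phi(v_j)})$ is $(\eps,d)$-regular, and the invariant guarantees $|C_j^{(i)}|\ge\eps m$. Hence at most $\eps m$ vertices of $V_{\phi(v_{i+1})}$ are bad for this particular $j$; excluding at most $k\eps m$ bad vertices in total and the at most $k$ already-used images, the bound $(d-\eps)^k m>k\eps m+k$ still leaves an available vertex inside $C_{i+1}^{(i)}$.

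The main obstacle is quantitative: $\eps$ must be chosen small enough, as a function of $d$ and $k$ alone, so that even after $k$ rounds of the above shrinking every candidate set still contains a fresh vertex. With the choice above and $n$ large enough that $m$ dominates the additive constants depending on $k$, the greedy procedure terminates after $k$ steps and produces the desired copy of $H$ in $G$.
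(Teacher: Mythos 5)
Your proof is the standard greedy embedding argument (candidate sets shrinking by a factor $d-\eps$ at each step, plus the slicing property of regular pairs), which is exactly the argument the paper defers to: the lemma is stated there with a reference to the regularity survey literature and no proof, so your route coincides with the intended one and the bookkeeping you give is correct. The one point to flag is your extra assumption that the common cluster size $m$ is large enough that $(d-\eps)^k m > k\eps m + k$; this is not a hypothesis of the lemma as stated, where $\eps$ may depend only on $d$ and $k$. In fact some such assumption is unavoidable once the homomorphism is allowed to be non-injective: with singleton clusters every pair is vacuously $\eps$-regular, and the graph on three vertices with a single edge has an $(\eps,d)$-regular partition into three singletons whose reduced graph is a single edge, to which $P_3$ maps homomorphically (fold the two endpoints onto one cluster), yet the graph contains no $P_3$. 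So the statement silently presupposes large clusters; this is harmless for the paper, because wherever the lemma is used the clusters come from the regularity lemma applied to an $n$-vertex graph with at most $k_1$ parts and $n$ large, so $m\ge(1-\eps)n/k_1$ exceeds any bound depending only on $d$ and $k$. You might also note that when the homomorphism is injective the additive term $k$ (the already-used images) disappears and your inequality $(d-\eps)^k>k\eps$ alone suffices, with no condition on $m$.
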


\subsection{Regular partitions and counting} 
\label{sec:subgraph}


As a warm up (and for the sake of completeness) we consider the problem of
counting graphs of a fixed density without a given (not necessarily
induced) subgraph~$F$ and prove~\eqref{eq:subgraph}.  For this purpose we
mimic the proof given by Erd\H{o}s, Frankl and R\"odl in \cite{ErdFraRod}
for the corresponding problem without fixed density.

\begin{proof}[Proof of~\eqref{eq:subgraph}]
  Let $F$ be a graph with $\chi(F)=r$ and $c>0$. Let $\gamma>0$ be
  given. For large enough $n$ the lower bound
  \begin{align*}
    |\Forb_n(F,c)|\ge 2^{\frac{r-2}{r-1}H(\frac{r-1}{r-2}c)\binom n2-\gamma
      \binom n2}
  \end{align*}
  can easily be obtained by counting subgraphs with $c\binom n2$ edges of
  the complete ($r-1$)-partite graph with $n/r$ vertices in each part, and by
  applying~\eqref{eq:BinCoe}.

  It remains to show the upper bound
  \begin{equation*}
    |\Forb_n(F,c)|\le 2^{\frac{r-2}{r-1}H(\frac{r-1}{r-2}c)\binom n2 +
      \gamma \binom n2}
  \end{equation*}
  for $n$ sufficiently large.  We choose~$d$ such that 
  \begin{equation*}
    0<d\le \min\Big\{\frac
    1{16}\,,\, \frac14\gamma^2 \,,\,
    \Big(-2\frac{r-1}{r-2}\log\big(1-c\frac{r-1}{r-2}\big)\Big)^{-2}
    \Big\} \,.
  \end{equation*}
  Since the entropy function is concave we have for each $\hat d\in[0,d]$ that
  \begin{equation}\label{eq:Defd}
    \begin{aligned}
      H\big((c-2\hat d\big)\tfrac{r-1}{r-2}) 
      &\le H(c\tfrac{r-1}{r-2})-2\hat d\tfrac{r-1}{r-2}
              H'(c\tfrac{r-1}{r-2}) \\
      &\le 
           H(c\tfrac{r-1}{r-2})
              -2\hat d\tfrac{r-1}{r-2}\log(1-c\tfrac{r-1}{r-2})
        \le H(c\tfrac{r-1}{r-2}) +\sqrt{\hat d} \,,
    \end{aligned}
  \end{equation}
  which we shall use later.
  Next, let~$\eps$ be the constant returned from
  Lemma~\ref{reg:lem:emb} for input~$d$ and with~$k$ replaced by~$r$. 
  Set $k_0=\lceil 10/d\rceil$ 
  and let~$k_1$ be the constant returned by
 Lemma~\ref{reg:lem:reg} for input~$k_0$ and~$\eps$.
  Further let $n\ge k_1$.

  Now we use the regularity lemma, Lemma~\ref{reg:lem:reg}, with parameters
  $\eps$, $k_0$ for each graph~$G$ in $\Forb_n(F,c)$. For each
  such application the regularity lemma produces an $\eps$-regular
  partition with at most~$k_1$ clusters, for which we can construct the
  corresponding $(\eps,d)$-reduced graph~$R$. Since~$k_1$ is finite there
  is only a finite number of different reduced graphs~$R$ resulting from
  these applications of the regularity lemma. Hence we can partition
  $\Forb_n(F,c)$ into a finite number of classes $\cR(R,\eps,d,n,F,c)$ of
  graphs with $(\eps,d)$-reduced graph $R$.
  Accordingly, it suffices to show that for each~$R$ we have
  \begin{equation}\label{eq:subgraph:goal}
      |\cR(R,\eps,d,n,F,c)|\le 2^{\frac{r-2}{r-1}H(\frac{r-1}{r-2}c)\binom n2 +
      \gamma \binom n2} \,.
  \end{equation}

  Let $R=(V_R,E_R)$ be any graph such that $\cR(R,\eps,d,n,F,c)$ is
  non-empty, let $k=|V_R|$,  let $G\in\cR(R,\eps,d,n,F,c)$ and let $P$ be an $\eps$-regular partition
  of~$G$ corresponding to~$R$.
  By the choice of~$k_0$ at most $k\binom{n/k}{2}\le\tfrac d2 \binom n2$ edges of~$G$
  are inside clusters of~$P$, at most $d\binom{n}{2}$ edges of $G$ are in
  regular pairs of~$P$ with density less than $d$, and at most $2\eps
  n^2\le \tfrac d2 \binom n2$ edges of~$G$ are in irregular pairs of~$P$ or
  have a vertex in the exceptional set. We conclude that at least
  $(c-2d)\binom n2$ edges of~$G$ lie in $(\eps,d)$-regular
  pairs of~$P$.
  In addition, by the choice of~$\eps$
  and since~$F$ has chromatic number~$r$, Lemma~\ref{reg:lem:emb}
  implies that $K_r\not\subseteq R$. It follows from Tur\'an's Theorem that
  $|E_R| \le \tfrac{r-2}{r-1}\binom{k}{2}$. 
  Summarising, we can bound the number of graphs in $\cR(R,\eps,d,n,F,c)$
  by bounding the number of ways to distribute at least
  $(c-2d)\binom n2$ edges to at most $\tfrac{r-2}{r-1}\binom{k}{2}$
  regular pairs (corresponding to edges of~$R$) with clusters of size at
  most $n/k$, and distributing at most $2d\binom{n}{2}$ edges arbitrarily.
  By the choice of~$n$, the first of these two factors can be bounded by
  \begin{equation*}
  \begin{split}
    \max\limits_{0\le\hat d \le d}
   \binom{\frac{r-2}{r-1}\binom{n}{2}}{(c-2\hat d)\binom n2} 
    &\leByRef{eq:BinCoe}
    \max\limits_{0\le\hat d \le d}
   2^{\frac{r-2}{r-1}\binom{n}{2} H\left(\frac{r-1}{r-2}(c-2\hat d) \right)} \\
    &\leByRef{eq:Defd}
    \max\limits_{0\le\hat d \le d}
    2^{\frac{r-2}{r-1}\binom n2 H\left(c\frac{r-1}{r-2}\right)+
     \sqrt{\hat d}\binom n2} \\
   &\le
    2^{\frac{r-2}{r-1}\binom n2 H\left(c\frac{r-1}{r-2}\right)+
     \sqrt{d}\binom n2}
  \,,
  \end{split}
  \end{equation*}
  and the second by $\le 2^{2d \binom n2}$. Since
  $2d +\sqrt{d}\le \gamma$ this implies~\eqref{eq:subgraph:goal} as desired.
\end{proof}

\subsection{Embedding induced subgraphs}
\label{sec:emb}

In the last section we showed how the regularity lemma and a corresponding
embedding lemma can be used to count graphs with forbidden subgraphs. 
In this section we provide the tools that will allow us to adapt this
strategy to the setting of forbidden induced subgraphs.

We remark that the concepts and ideas presented in this section are not new.
They were used for various similar applications, e.g., by Bollob\'as and
Thomason~\cite{BolTho00} or Loebl, Reed, Scott, Thomason, and
Thomass\'e~\cite{LoeReeScoThoTho}, 
as well as for different applications such as property testing,
e.g., by Alon, Fischer, Krivelevich and Szegedy~\cite{AloFisKriSze}, or Alon and
Shapira \cite{AloSha}.

\smallskip

We start with an embedding lemma for induced subgraphs, which allows us to
find an induced copy of a graph~$F$ in a graph~$G$ with reduced graph~$R$
if~$F$ is an induced subgraph of~$R$ (see, e.g., \cite{AloFisKriSze}).

\begin{lemma}[injective embedding lemma for induced subgraphs]
  \label{lem:EmbedInd} For every $d>0$ and every integer $k$ there exists
  $\eps>0$ such that for all $f\le k$ the following holds. Let $V_1,\dots,V_f$ be clusters of
  an $\eps$-regular partition of a graph $G$ such that for all $1\le i<j\le
  f$ the pair $(V_i,V_j)$ is $\eps$-regular. Let $F=(V_F,E_F)$ be a graph
  on $f$ vertices and let $g\colon V_F\to[f]$ be an injective mapping
  from~$F$ to the clusters of~$G$ such that for all $1\le i<j\le
  f$ we have $d(V_i,V_j)\ge d$ if
  $\{g^{-1}(i),g^{-1}(j)\} \in E(F)$ and $d(V_i,V_j)\le 1-d$ if
  $\{g^{-1}(i),g^{-1}(j)\} \not\in E(F)$.  Then $G$ contains an
  induced copy of $F$.  \qed
\end{lemma}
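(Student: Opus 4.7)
The plan is to embed the vertices of $F$ greedily, one at a time, maintaining a candidate set for each vertex not yet embedded. Identifying $V_F$ with $[f]$ via the bijection $g$, the goal is to pick, for each $i\in[f]$, a vertex $u_i\in V_i$ such that $u_iu_j\in E(G)$ precisely when $ij\in E(F)$.

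I would start by choosing $\eps>0$ small enough that $\eps\le d/4$ and $(d/2)^{k-1}\ge 2k\eps$, a choice depending only on $d$ and $k$. After $u_1,\dots,u_{i-1}$ have been selected, I maintain for each $j\ge i$ the candidate set
\[
C_j \;=\; \{\,w\in V_j : u_sw\in E(G)\text{ iff }sj\in E(F)\text{ for every }s<i\,\},
\]
with the inductive invariant $|C_j|\ge(d/2)^{\,i-1}|V_j|$. The choice of $\eps$ then ensures $|C_j|\ge 2k\eps|V_j|\ge\eps|V_j|$, so the $\eps$-regularity of $(V_i,V_j)$ can still be applied to the restricted pair $(C_i,C_j)$.

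The crucial step is the extension. For each $j>i$ the pair $(V_i,V_j)$ has density at least $d$ if $ij\in E(F)$ and at most $1-d$ otherwise; in the second case the bipartite complement of $(V_i,V_j)$ is also $\eps$-regular and has density at least $d$. In either case, $\eps$-regularity applied to $(C_i,C_j)$ implies that all but at most $\eps|V_i|$ vertices $w\in C_i$ have at least $(d-2\eps)|C_j|\ge(d/2)|C_j|$ ``correct'' neighbours in $C_j$ (meaning neighbours if $ij\in E(F)$ and non-neighbours if $ij\notin E(F)$). Summing over the fewer than $k$ indices $j>i$, at most $k\eps|V_i|<|C_i|$ choices in $C_i$ are bad, so a good $u_i$ exists; updating each $C_j$ by intersecting with the neighbourhood or non-neighbourhood of $u_i$ in $V_j$, as appropriate, shrinks it by a factor at least $d/2$ and preserves the invariant. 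After $f\le k$ steps, $u_1,\dots,u_f$ form the desired induced copy of $F$.

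The main obstacle, as far as there is one, is purely quantitative: $\eps$ must be small enough that every candidate set stays both above the $\eps|V_j|$ threshold needed to invoke regularity and strictly larger than the total number of potential bad choices at each step. The bound $(d/2)^{k-1}\ge 2k\eps$ is designed to enforce both requirements simultaneously, and the rest of the argument is routine bookkeeping.
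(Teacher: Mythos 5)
Your greedy candidate-set argument is correct: the invariant $|C_j|\ge (d/2)^{i-1}|V_j|\ge 2k\eps|V_j|$ keeps every candidate set large enough to apply $\eps$-regularity (to the pair or, for non-edges of $F$, to its bipartite complement), and the count of at most $k\eps|V_i|<|C_i|$ bad choices at each step is the standard bookkeeping. The paper itself gives no proof of this lemma, citing it as a standard fact (Alon--Fischer--Krivelevich--Szegedy), and your argument is precisely the usual proof of it.
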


In contrast to Lemma~\ref{reg:lem:emb}, this lemma allows us only to embed
one vertex per cluster of~$G$. Our goal in the following will be to
describe an embedding lemma for induced subgraphs which allows us to embed
more than one vertex per cluster.  Observe first, that for this purpose we
must have some control over the existence of edges respectively non-edges
\emph{inside} clusters of a regular partition of~$G$. This can be achieved
by applying the following lemma to each of these clusters. It is not
difficult to infer this lemma from the regularity lemma
(Lemma~\ref{reg:lem:reg}) by applying Tur\'an's theorem and Ramsey's
theorem (see, e.g., \cite{AloFisKriSze}).

We use the following definition. An \emph{$(\mu,\eps,k)$-subpartition}
of a graph $G=(V,E)$ is a family of pairwise disjoint vertex sets
$W_1,\dots,W_k \subseteq V$ with $|W_i| \ge \mu |V|$ for all
$i\in[k]$ such that each pair $(W_i,W_j)$ with $\{i,j\}\in\binom k2$ is
$\eps$-regular.  A $(\mu,\eps,k)$-subpartition $W_1,\dots,W_k$
of~$G$ is \emph{dense} if $d(W_i,W_j)\ge\frac12$ for all $\{i,j\}\in\binom
k2$, and \emph{sparse} if $d(W_i,W_j)<\frac12$ for all $\{i,j\}\in\binom
k2$.

\begin{lemma} \label{lem:VertexColour} For every $k$ and $\eps$ there
  exists $\mu>0$ such that every graph $G=(V,E)$
  with $n\ge \mu^{-1}$ vertices either has a sparse or a dense
  $(\mu,\eps,k)$-subpartition.  \qed
\end{lemma}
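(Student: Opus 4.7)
The plan is to combine Szemer\'edi's regularity lemma (Lemma~\ref{reg:lem:reg}) with Tur\'an's theorem and two-colour Ramsey. Intuitively: a regular partition of $G$ supplies many pairwise $\eps$-regular clusters; each cluster pair carries a density which is either at least $\tfrac12$ or less than $\tfrac12$; and Ramsey lets me extract $k$ clusters that are mutually homogeneous under this two-colouring.

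Concretely, I would fix parameters in the order $(k,\eps) \to R:=R(k,k) \to \eps':=\min\{\eps, 1/(2R)\} \to k_0:=2R \to k_1$, with $k_1$ the constant returned by Lemma~\ref{reg:lem:reg} on input $(\eps',k_0)$, and then set $\mu:=(1-\eps')/k_1$. Given $G=(V,E)$ on $n\ge\mu^{-1}$ vertices, Lemma~\ref{reg:lem:reg} supplies an $\eps'$-regular equipartition $V_0\dcup V_1\dcup\dots\dcup V_K$ with $k_0\le K\le k_1$ and $|V_i|\ge(1-\eps')n/K\ge\mu n$. Let $H$ be the auxiliary graph on $[K]$ whose edges are the $\eps'$-regular pairs of the partition. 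Its complement has at most $\eps'\binom{K}{2}$ edges and hence average degree at most $\eps'(K-1)$, so by the Tur\'an/Caro--Wei bound $H$ contains a clique $I$ of size at least $K/(\eps'(K-1)+1)\ge R$. Within $I$ every pair of clusters is $\eps'$-regular, and therefore also $\eps$-regular.

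To finish, I would two-colour each pair $\{i,j\}\in\binom{I}{2}$ red if $d(V_i,V_j)\ge\tfrac12$ and blue otherwise, and apply Ramsey's theorem to find a monochromatic $J\subseteq I$ with $|J|=k$. The family $\{V_i\}_{i\in J}$ is then pairwise disjoint, consists of sets of size at least $\mu n$, is pairwise $\eps$-regular, and is dense or sparse depending on the monochromatic colour, which is exactly a $(\mu,\eps,k)$-subpartition of the required type. All three tools are standard; the main task is simply to schedule the parameter hierarchy correctly so that the Tur\'an step outputs at least $R(k,k)$ pairwise regular clusters before Ramsey is invoked, and I do not anticipate any deeper obstacle.
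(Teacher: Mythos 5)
Your proof is correct and follows exactly the route the paper indicates for this lemma (which it only sketches, citing the regularity lemma together with Tur\'an's and Ramsey's theorems): regularize with a finer $\eps'$, extract $R(k,k)$ pairwise regular clusters via the Tur\'an/Caro--Wei bound on the complement of the regularity graph, and finish with Ramsey on the density-$\tfrac12$ two-colouring. The parameter scheduling $\eps'\le\min\{\eps,1/(2R)\}$, $k_0=2R$, $\mu=(1-\eps')/k_1$ checks out, including $n\ge\mu^{-1}\ge k_1$ and $|V_i|\ge\mu n$.
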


The idea for the embedding lemma for induced subgraphs~$F$ of~$G$ now is as
follows.  We first find a regular partition of~$G$. By
Lemma~\ref{lem:EmbedInd}, if a regular pair~$(V_i,V_j)$ in this partition
is very dense then we can embed edges of~$F$ into $(V_i,V_j)$, if it
is very sparse then we can embed non-edges of~$F$, and if its density is
neither very small nor very big then we can embed both edges and non-edges
of~$F$. Moreover, Lemma~\ref{lem:VertexColour} asserts that each cluster
either has a sparse or a dense subpartition.  In the first case we can
embed non-edges inside this cluster, in the second case we can embed edges.

This motivates that we want to tag the reduced graphs with some additional
information. For this purpose we colour an edge of the reduced graph white
if the corresponding regular pair is sparse, grey if it is of medium
density, and black if it is dense. Moreover we colour a cluster white if
it has a sparse subpartition and black otherwise. We call a cluster graph that is
coloured in this way a type. The following definitions make this
precise.
%

\begin{definition}[coloured graph, type]
\label{def:type}
  A \emph{coloured graph} $R$ is a triple $(V_R,E_R,\sigma)$ such that
  $(V_R,E_R)$ is a graph and $\sigma\colon V_R\cup E_R\to\{0,\frac12,1\}$ is a
  colouring of the vertices and the edges of this graph where
  $\sigma(V_R)\subset\{0,1\}$. Vertices and edges with colour $0$, $\frac12$, 
  and $1$ are also called \emph{white}, \emph{grey}, and \emph{black},
  respectively.

  Let $G=(V,E)$ be a graph and let $V=V_0\dcup V_1\dcup\dots\dcup V_k$ be an
  $\eps$-regular partition of~$G$ with reduced graph $([k],E_R)$.  The
  \emph{$(\eps,\eps',d,k')$-type} $R$ corresponding to the partition $V_0\dcup
  V_1\dcup\dots\dcup V_k$ is the coloured graph $R=([k],E_R,\sigma)$ with colouring
  \begin{equation*}
    \sigma(\{i,j\}) = 
    \begin{cases}
      0 & \text{ if $d(V_i,V_j)<d$}\,, \\
      1 & \text{ if $d(V_i,V_j)>1-d$}\,, \\
      \tfrac12 & \text{ otherwise}\,,
    \end{cases}
  \end{equation*}
  for all $\{i,j\}\in E_R$ and
 \begin{align*}
    \sigma(i) =
    \begin{cases}
      0 & \text{ if $G[V_i]$ has a sparse $(\mu,\eps',k')$-subpartition}\,, \\
      1 & \text{ if $G[V_i]$ has a dense $(\mu,\eps',k')$-subpartition}\,,
    \end{cases}
  \end{align*}
  for all $i\in[k]$, where $\mu$ is the constant from
  Lemma~\ref{lem:VertexColour} for input~$k'$ and~$\eps'$.
  In this case we also simply say that~$G$ has $(\eps,\eps',d,k')$-type~$R$.
\end{definition}

By the discussion above a combination of the regularity lemma,
Lemma~\ref{reg:lem:reg}, and Lemma~\ref{lem:VertexColour} gives the following.

\begin{lemma}[type lemma]
\label{lem:type}
  For every $\eps,\eps'\in(0,\frac12)$ and for all integers~$k'$, $k_0$ there
  are integers $k_1$ and $n_0$ such that for every $d>0$
  every graph $G$ on at least $n_0$ vertices has an
  $(\eps,\eps',d,k')$-type $R=([k],E_R,\sigma)$ with $k_0\le k\le k_1$ and with
  at most $\eps k^2$ non-edges.
\end{lemma}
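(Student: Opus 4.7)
The plan is to combine Szemer\'edi's regularity lemma with the cluster-refining Lemma~\ref{lem:VertexColour}. Given inputs $\eps,\eps'\in(0,\tfrac12)$ and integers $k',k_0$, I first apply Lemma~\ref{lem:VertexColour} to extract the constant $\mu>0$ associated with $k'$ and $\eps'$. Then I apply Lemma~\ref{reg:lem:reg} with inputs $\eps$ and $k_0$ to obtain an upper bound~$K_1$ for the number of clusters, set $k_1:=K_1$, and choose $n_0$ large enough that every non-exceptional cluster produced by the regularity lemma on any $n$-vertex graph with $n\ge n_0$ has at least $\mu^{-1}$ vertices (so $n_0$ of order $k_1/\mu$ suffices). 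Crucially, none of these constants depend on~$d$, which matches the order of quantifiers in the statement.

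Now let $d>0$ and let $G$ be a graph on $n\ge n_0$ vertices. The regularity lemma supplies an $\eps$-regular partition $V_0\dcup V_1\dcup\dots\dcup V_k$ with $k_0\le k\le k_1$. Take
\[
  E_R:=\big\{\{i,j\}\in\tbinom{[k]}{2}\colon (V_i,V_j)\text{ is $\eps$-regular}\big\},
\]
which by the definition of an $\eps$-regular partition misses at most $\eps\binom{k}{2}\le\eps k^2$ pairs; this gives the required bound on the number of non-edges of~$R$. Define the edge colouring of $R$ verbatim from Definition~\ref{def:type}: set $\sigma(\{i,j\})$ equal to $0$, $\tfrac12$, or $1$ according as the pair $(V_i,V_j)$ has density $<d$, density in $[d,1-d]$, or density $>1-d$.

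For the vertex colouring, fix $i\in[k]$ and apply Lemma~\ref{lem:VertexColour} to $G[V_i]$ with inputs $k'$ and $\eps'$; this application is legal since $|V_i|\ge\mu^{-1}$ by the choice of~$n_0$, and it produces either a sparse or a dense $(\mu,\eps',k')$-subpartition. Set $\sigma(i)=0$ in the former case and $\sigma(i)=1$ in the latter, breaking ties arbitrarily when both subpartitions exist. Together with the edge set $E_R$ and the edge colouring, this yields the desired $(\eps,\eps',d,k')$-type $R=([k],E_R,\sigma)$. The only subtlety worth flagging is the order of quantifiers: $k_1$ and $n_0$ must be chosen before $d$ is revealed. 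This causes no difficulty because $d$ enters only through the density-based edge colouring, whereas the regularity lemma and Lemma~\ref{lem:VertexColour} each supply their partitions without reference to~$d$.
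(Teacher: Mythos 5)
Your proposal is correct and follows essentially the same route as the paper: fix $\mu$ from Lemma~\ref{lem:VertexColour} and $k_1$ from Lemma~\ref{reg:lem:reg} independently of~$d$, choose $n_0$ of order $k_1/\mu$ so that each cluster has at least $\mu^{-1}$ vertices, take $E_R$ to be the $\eps$-regular pairs (at most $\eps\binom{k}{2}\le\eps k^2$ missing), and colour edges by density thresholds and vertices via Lemma~\ref{lem:VertexColour}, exactly as in the paper's proof (which sets $n_0=2\mu^{-1}k_1$ and verifies $|V_i|\ge(1-\eps)n/k\ge\mu^{-1}$). Your explicit remark on the quantifier order for~$d$ matches the paper's handling, where $d$ is only revealed after $k_1$ and $n_0$ are fixed.
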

\begin{proof}
  Given $\eps$,~$\eps'$ and~$k'$,~$k_0$ we let~$k_1$ be the constant returned
  from Lemma~\ref{reg:lem:reg} for input~$\eps$ and~$k_0$, and~$\mu$ be the
  constant returned from Lemma~\ref{lem:VertexColour} for input~$\eps'$
  and~$k'$. Set $n_0:=2\mu^{-1}k_1$.

  Now let $d$ be given and let $G$ be a graph on at least $n_0$ vertices.  By
  Lemma~\ref{reg:lem:reg} the graph~$G$ has an $\eps$-regular partition
  $V=V_0\dcup V_1 \dcup \dots \dcup V_k$ such that $k_0\le k\le k_1$. By
  definition at most $\eps\binom{k}{2}\le\eps k^2$ pairs $(V_i,V_j)$ are not
  $\eps$-regular. Let $R'=([k],E_R)$ be the $\eps$-reduced graph of $V_0\dcup
  V_1 \dcup \dots \dcup V_k$. It follows that~$R'$ has at most $\eps k^2$
  non-edges.  Let $i\in[k]$. Since $|V_i|\ge(1-\eps)n/k\ge
  n/(2k_1)\ge\mu^{-1}$ we can apply Lemma~\ref{lem:VertexColour} and conclude
  that $V_i$ either has a sparse or a dense
  $(\mu,\eps',k')$-subpartition. Accordingly we obtain an
  $(\eps,\eps',d,k')$-type $R=([k],E_R,\sigma)$ for~$G$ by colouring the edges
  and vertices of~$R'$ as specified in Definition~\ref{def:type}.
\end{proof}

For formulating our embedding lemma we need one last preparation. We 
generalise the concept of a graph homomorphism to the setting of
coloured graphs.

\begin{definition}[coloured homomorphism]
\label{def:colhom}
 Let~$F=(V_F,E_F)$ be a graph and $R=(V_R,E_R,\sigma)$ be a coloured graph.
  A \emph{coloured homomorphism} from~$F$ to~$R$ is a mapping $h\colon V_F \to
  V_R$ with the following properties.
  \begin{enumerate}[label=\abc]
  \item If $u,v\in V_F$ and $h(u)\neq h(v)$ then $\{h(u),h(v)\}\in E_R$.
  \item If $\{u,v\}\in E_F$ then \\ $h(u)=h(v)$ and $\sigma(h(u))=1$, or
    $h(u)\neq h(v)$ and
    $\sigma\Big(\big\{h(u),h(v)\big\}\Big)\in\{\tfrac12,1\}$.
    \item If $\{u,v\}\notin E_F$ then \\ $h(u)=h(v)$ and $\sigma(h(u))=0$,
    or $h(u)\neq h(v)$ and $\sigma\Big(\big\{h(u),h(v)\big\}\Big)\in\{0,\tfrac12\}$.
  \end{enumerate}
  If there is a coloured homomorphism from~$F$ to~$R$ we also write $F\colhom
  R$.
\end{definition}

The following embedding lemma states that a graph $F$ is an induced
subgraph of a graph $G$ with type $R$ if there is a coloured homomorphism
from~$F$ to~$R$.  This lemma is, e.g, inherent in~\cite{AloSha}. For
completeness we provide its proof below.

\begin{lemma}[embedding lemma for induced graphs] \label{lem:embed}
  For every pair of integers~$k$, $k'$ and for every~$d\in(0,1)$ there are
  $\eps,\eps'>0$ such that the following holds.
  Let $f\le k'$ and~$G$ be a graph on~$n$ vertices with
  $(\eps,\eps',d,k')$-type $R$ on~$k$ vertices. Let~$F$ be an $f$-vertex graph such that there is a coloured
  homomorphism from $F$ to $R$. Then~$F$ is an induced subgraph of~$G$.
\end{lemma}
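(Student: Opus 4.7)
The plan is to embed $F$ into $G$ vertex by vertex via a standard greedy argument based on regularity. Let $h:V_F\to V_R$ denote the given coloured homomorphism. For every $v\in V_R$ we have $\sigma(v)\in\{0,1\}$, so by Definition~\ref{def:type} the cluster $V_v$ of $G$ carries a $(\mu,\eps',k')$-subpartition $W_1^{(v)},\dots,W_{k'}^{(v)}$ which is sparse if $\sigma(v)=0$ and dense if $\sigma(v)=1$. Since $|h^{-1}(v)|\le f\le k'$, I can injectively assign to each $u\in V_F$ its own target subcluster $W_u\subseteq V_{h(u)}$. Inspecting Definition~\ref{def:colhom} one then checks that for $u\ne u'$ in $V_F$ this choice of targets is consistent with the desired adjacency pattern of $F$: if $h(u)\ne h(u')$, the pair $(W_u,W_{u'})$ has density at least $d-\eps$ when $\{u,u'\}\in E_F$ and at most $1-d+\eps$ when $\{u,u'\}\notin E_F$; if $h(u)=h(u')$, the subpartition is dense or sparse exactly as needed, so the density is at least $\tfrac12$ or at most $\tfrac12$ accordingly. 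In both cases the corresponding regularity (inherited $\eps$-regularity across clusters, $\eps'$-regularity inside a cluster) is retained provided $\eps\ll\mu$.

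I would then enumerate $V_F=\{u_1,\dots,u_f\}$ and maintain candidate sets $C_i\subseteq W_{u_i}$, initially $C_i=W_{u_i}$. In round $t$, I pick $x_t\in C_t$ satisfying the correct adjacency to every already-chosen $x_s$ (which is automatic because during earlier rounds I restricted $C_t$ to lie inside $N_G(x_s)$ or outside it, as dictated by $\{u_s,u_t\}$) and simultaneously update each future $C_j$ (with $j>t$) to
\[
  C_j' \;:=\; \begin{cases}
    C_j\cap N_G(x_t), & \{u_t,u_j\}\in E_F,\\
    C_j\setminus N_G(x_t), & \{u_t,u_j\}\notin E_F,
  \end{cases}
\]
requiring $|C_j'|\ge(\delta-\eps)|C_j|$ with $\delta:=\min\{d,\tfrac12\}$. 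Regularity of the pair $(W_{u_t},W_{u_j})$ (inherited either from $(V_{h(u_t)},V_{h(u_j)})$ or from the subpartition of a common cluster) ensures that, as long as $|C_j|$ is above the regularity threshold, the set of choices of $x_t$ that fail this condition for at least one $j$ has size at most $k'\eps|W_{u_t}|$, which is only a tiny fraction of $|C_t|$.

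The constants are chosen in the order $\eps'\ll d,1/k'$, then $\mu=\mu(\eps',k')$ from Lemma~\ref{lem:VertexColour}, and finally $\eps\ll\mu$ with $\eps\le(\delta/2)^{k'}\mu/(10k')$. With these choices every candidate set still has size at least $(\delta-\eps)^{k'}\mu|V_{h(u_j)}|\gg\eps|V_{h(u_j)}|$ after all $f\le k'$ updates, so regularity remains applicable throughout and the bad-vertex bound leaves a valid $x_t\in C_t$ in every round. The resulting assignment $u_i\mapsto x_i$ is injective because the $W_u$ are pairwise disjoint, and by construction it realises precisely the edges and non-edges of $F$, yielding the desired induced copy. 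The only delicate step is the two-level bookkeeping: the subclusters $W_u\subseteq V_{h(u)}$ are not literally $\eps$-regular with one another across different clusters but only $(\eps/\mu)$-regular, with density within $\eps$ of the original cluster density, so $\mu$ has to be chosen large relative to $\eps$. Everything else reduces to the standard greedy embedding scheme.
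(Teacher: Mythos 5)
Your construction is essentially the paper's: you use the vertex colours of the type to select, inside each cluster $V_{h(u)}$, a distinct subcluster of its sparse or dense $(\mu,\eps',k')$-subpartition for every vertex $u$ of $F$ (possible since $f\le k'$), and you read off from Definition~\ref{def:colhom} exactly the density constraints that the paper verifies. The only divergence is the final step: the paper feeds this system of subclusters into the injective embedding lemma for induced subgraphs (Lemma~\ref{lem:EmbedInd}, quoted as standard) with parameter $d/2$, whereas you re-prove that step inline by the usual greedy candidate-set embedding. Your route is thus self-contained (no black box), at the cost of the constant bookkeeping that the citation avoids; structurally the two proofs coincide. One quantitative slip to fix: in the within-cluster steps your candidate sets live inside a subcluster $W_{u_j}$ and must stay above the $\eps'$-regularity threshold of the subcluster pairs, so you need roughly $k'\eps'\le\bigl(\min\{d,\tfrac12\}/2\bigr)^{k'}$, i.e.\ $\eps'$ exponentially small in $k'$; this is permitted (as $\eps'$ may depend on $d$ and $k'$, and is chosen before $\mu$), but it is stronger than the stated ``$\eps'\ll d,1/k'$''. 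With that adjustment, together with your condition $\eps\le(\delta/2)^{k'}\mu/(10k')$ handling the cross-cluster pairs (whose regularity and densities are inherited from the $\eps$-regular cluster pairs guaranteed by property (a) of the coloured homomorphism), the argument goes through.
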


\begin{proof}
  Let $k,k'\in\N$ and $d\in(0,1)$ be given. Let~$\eps'$ be given by
  Lemma~\ref{lem:EmbedInd} for input $\tfrac d2$ and~$k'$.
  Let $\mu$ be the constant from Lemma~\ref{lem:VertexColour} for input~$k'$
  and~$\eps'$.  Set $\eps:=\min\{ d/2, \mu\eps' \}$.

  Let $G$, $R=([k],E_R,\sigma)$ and~$F$ be as required, let $V_0\dcup
  V_1\dcup\dots\dcup V_k$ be an $\eps$-regular partition of~$G$
  corresponding to~$R$ and let $h\colon F \colhom R$ be a coloured
  homomorphism from~$F$ to~$R$.  For each $i\in[k]$ we have by definition, if
  $\sigma(V_i)=0$ then $V_i$ has a sparse $(\mu,\eps',k')$-subpartition
  $W_{i,1},\dots,W_{i,k'}$, and if $\sigma(V_i)=1$ then $V_i$ has a dense
  $(\mu,\eps',k')$-subpartition $W_{i,1},\dots,W_{i,k'}$.

  Observe that $\bigcup_{i\in[k],j\in[k']} W_{i,j}$ has the following
  properties, since $\eps'\ge\eps/\mu$ and $\eps\le d/2$.
  If $\{i,i'\}\in E_R$ then for all $j,j'\in[k']$ the pair
 \begin{equation}\label{eq:embed:eps1}
   \text{$(W_{i,j},W_{i',j'})$ is $\eps'$-regular,}
 \end{equation}
 and has density
  \begin{equation}\label{eq:embed:d1}
   d(W_{i,j},W_{i',j'}) \in
   \begin{cases}
     [0,2d) & \text{if $\sigma(\{i,i'\})=0$} \,,\\
     (1-2d,1] & \text{if $\sigma(\{i,i'\})=1$} \,,\\
     (\frac d2, 1-\frac d2)  & \text{if $\sigma(\{i,i'\})=\frac12$}
     \,.
  \end{cases}
 \end{equation}
 Moreover, for all $i\in[k]$ and all $j,j'\in[k']$ with $j\neq j'$ the pair
 \begin{equation}\label{eq:embed:eps2}
   \text{
     $(W_{i,j},W_{i,j'})$ is $\eps'$-regular}
\end{equation}
 and has density
 \begin{equation}\label{eq:embed:d2}
   d(W_{i,j},W_{i,j'}) \in
   \begin{cases}
     [0,\frac12) & \text{if $\sigma(i)=0$} \,, \\
     [\frac12,1]  & \text{if $\sigma(i)=1$}
    \,.
  \end{cases}
 \end{equation}

  Now, we define an injective mapping $g\colon V(F) \to [k]\times[k']$ as
  follows. For~$i\in[k]$ let $F_i:=\{x\in V(F)\colon h(x)=i \}$ and name the
  vertices in $F_i$ arbitrarily by
  $\{x_{i,1},\dots,x_{i,f_i}\}=F_i$. Set $g(x_{i,j}):=(i,j)$ for all $j\in[f_i]$.
  This is well-defined since $|F_i|=f_i\le f\le k'$. 
  Let~$I\subset[k]$ with $|I|\le f$ be the set of indices $i\in[k]$ such that
  $F_i\neq\emptyset$. 

  We claim that~$G$, $\bigcup_{i\in I,j\in[f_i]}
  W_{i,j}$, $F$, and $g$ satisfy the conditions of Lemma~\ref{lem:EmbedInd}
  with parameters~$d/2$, $k'$, $\eps'$, and~$f$. Indeed,
  by~\eqref{eq:embed:eps2} each cluster pair $(W_{i,j},W_{i,j'})$ with
  $i\in I$ and $j,j'\in[k']$, $j\neq j' $ is $\eps'$-regular. Moreover, for
  each $i,i'\in I$ with $i\neq i'$ we have that there are $x\in F_i$ and
  $y\in F_{i'}$. By the definition of a coloured homomorphism
  (Definition~\ref{def:colhom}) we have that
  $\{V_i,V_{i'}\}=\{h(x),h(y)\}\in E_R$. Hence~\eqref{eq:embed:eps1}
  implies that also $(W_{i,j},W_{i',j'})$ is $\eps'$-regular.  It remains
  to show that, if $x,y$ are two vertices in $V(F)$ and $(i,j)=g(x)$ and
  $(i',j')=g(y)$ then $d(W_{i,j},W_{i',j'})\ge d/2$ if $\{x,y\}\in E(F)$
  and $d(W_{i,j},W_{i',j'})\le 1-d/2$ otherwise. To see this, assume first
  that $\{x,y\}\in E(F)$. Then, by the definition of a coloured
  homomorphism, either $h(x)=h(y)$ and $\sigma(h(x))=1$, which implies
  $d(W_{i,j},W_{i',j'})\ge\frac12\ge d/2$ by~\eqref{eq:embed:d2}. Or $h(x)\neq
  h(y)$ and $\sigma(\{h(x),h(y)\})\ge\frac12$ and hence we have
  $d(W_{i,j},W_{i',j'})\ge d/2$ by~\eqref{eq:embed:d1}. If $\{x,y\}\not\in
  E(F)$ on the other hand, then either $h(x)=h(y)$ and $\sigma(h(x))=0$,
  and so $d(W_{i,j},W_{i',j'})\le\frac12\le 1- d/2$ by~\eqref{eq:embed:d2}. Or
  $h(x)\neq h(y)$ and $\sigma(\{h(x),h(y)\})\le\frac12$ and thus
  $d(W_{i,j},W_{i',j'})\le 1-d/2$ by~\eqref{eq:embed:d1}.
  
  It follows that we can indeed apply Lemma~\ref{lem:EmbedInd} and conclude
  that~$F$ is an induced subgraph of~$G$ as desired.
\end{proof}

\subsection{The upper bound of Theorem~\ref{thm:main}}
\label{sec:upper}

Now we are ready to prove the upper bound of Theorem~\ref{thm:main}, that is, we
establish the following lemma.

\begin{lemma}
\label{lem:upper}
 For all $c,\gamma\in(0,1)$ there is~$n_0$ such that for all
 $n\ge n_0$ we have \[|\cC(n,c)|\le 2^{h(c)\binom{n}{2}+\gamma \binom n2}\,.\]
\end{lemma}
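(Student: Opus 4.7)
The plan is to apply Lemma~\ref{lem:type} to classify every $G\in\cC(n,c)$ by its $(\eps,\eps',d,k')$-type, and then to bound the number of graphs realising each type via a structural restriction (no grey triangle) combined with an entropy count that is constrained by the fixed density $c$. I would fix parameters in a sequence dictated by the embedding lemma: choose $d>0$ small (so that $H(d)$ is negligible compared to $\gamma$), obtain $\eps,\eps'>0$ from Lemma~\ref{lem:embed} applied with $k'=5$ (the number of vertices of $C_5$) and density parameter $d/2$, choose $k_0$ large enough that $1/k_0$ and $\eps$ are also negligible, and let $k_1$ come from Lemma~\ref{lem:type}. Each $G\in\cC(n,c)$ receives a type $R=([k],E_R,\sigma)$ with $k_0\le k\le k_1$ and at most $\eps k^2$ non-edges; since the number of possible types is bounded by a constant depending only on $k_1$, it suffices to bound the cardinality of the set $\cR(R)$ of graphs realising a fixed type $R$. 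By Lemma~\ref{lem:embed}, induced $C_5$-freeness of $G$ forces $C_5\not\colhom R$.

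The structural heart of the argument is the claim that $C_5\not\colhom R$ forces the grey-edge subgraph of $R$ to be triangle-free. Suppose otherwise that $r_1,r_2,r_3\in V_R$ form a grey triangle. For any pattern $(\sigma(r_1),\sigma(r_2),\sigma(r_3))\in\{0,1\}^3$ one can partition $V(C_5)$ into three non-empty parts $(P_1,P_2,P_3)$ such that $P_i$ is a clique of $C_5$ whenever $\sigma(r_i)=1$ and an independent set whenever $\sigma(r_i)=0$ (singletons serve either role): for example $(\{1,2\},\{3,4\},\{5\})$ handles every pattern in which at most one vertex is white, the proper $3$-colouring $(\{1,3\},\{2,4\},\{5\})$ handles the all-white pattern, and mixed patterns are handled by partitions such as $(\{1,2\},\{3,5\},\{4\})$. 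Since all three cross-edges of the triangle in $R$ are grey, the grey colour is compatible with any combination of edges and non-edges of $C_5$ between parts; hence the map $v\mapsto r_i$ for $v\in P_i$ is a coloured homomorphism, contradicting the assumption. By Mantel's theorem $R$ has at most $k^2/4$ grey edges, so the fraction $\gamma_g$ of grey pairs among $\binom{[k]}2$ satisfies $\gamma_g\le 1/2+o(1)$.

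Given this bound I would count the graphs in $\cR(R)$. The choice of partition of $[n]$ into clusters, the intra-cluster edges, the configurations on the $\le\eps k^2$ irregular pairs, and the near-extremal configurations on black and white pairs (each of which is restricted to density within $d$ of an extreme, so costs only $H(d)$ per bit) together contribute at most $2^{(\gamma/2)\binom n2}$, with the parameter choices absorbing all the error terms. The main contribution comes from the grey pairs: if $c_g$ denotes the average edge-density across grey pairs, then the edge-count gives $c=\gamma_b+\gamma_g c_g+o(1)$, and concavity of the entropy bounds the number of grey-edge configurations by $2^{\gamma_g H(c_g)\binom n2+o(n^2)}$. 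It remains to maximise $\gamma_g H(c_g)$ over the region $\{0\le\gamma_g\le 1/2,\ \gamma_b\ge 0,\ \gamma_b+\gamma_g c_g=c\}$; setting $y=\gamma_g c_g$ and using concavity of $y\mapsto\gamma_g H(y/\gamma_g)$ on $[0,\gamma_g]$, a short calculation yields the value $1/2=h(c)$ whenever $c\in[1/4,3/4]$ and the value $H(2c)/2=h(c)$ whenever $c\le 1/4$ (using $2c\le H(2c)/2$ on $[0,1/4]$); the case $c\ge 3/4$ follows by complementation, since $\cC(n,c)$, the family of types, and~$h$ are all complementation-symmetric.

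The main obstacle is the structural claim in the second paragraph: while the resulting bound $\gamma_g\le 1/2$ is clean and tight, its justification rests on a case analysis over the eight vertex-colour patterns of a grey triangle, with each case being settled by exhibiting a suitable three-part partition of $V(C_5)$. Once this bound is available, the rest of the proof is essentially routine — the error-term bookkeeping is standard for regularity-based counting, and the entropy maximisation is a short calculus exercise forced by the density constraint.
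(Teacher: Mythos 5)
Your proposal is correct and follows essentially the same route as the paper: classify graphs by their $(\eps,\eps',d,k')$-type via Lemma~\ref{lem:type}, use the coloured-homomorphism embedding lemma to show that induced $C_5$-freeness forbids a grey triangle in the type (your colour-pattern case analysis is equivalent to the paper's ``two black or two white vertices'' argument in Lemma~\ref{lem:GreyTriangle}), bound the grey edges by Mantel/Tur\'an, and finish with an entropy count per type. The only cosmetic differences are that you carry out an explicit maximisation of $\gamma_g H(c_g)$ under the density constraint where the paper simply bounds $\binom{n^2/4}{c\binom n2}$ with a case split at $c=\tfrac14$, and that you invoke complementation only for $c>\tfrac34$ rather than reducing to $c\le\tfrac12$ at the outset.
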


The idea of the proof of Lemma~\ref{lem:upper} is as follows. We proceed in
three steps.
Firstly, similarly as in
  the proof of~\eqref{eq:subgraph} in Section~\ref{sec:subgraph} we start by
  applying the regularity lemma to all graphs in $\cC(n,c)$. For each of the
  regular partitions obtained in this way there is a corresponding type, and in
  total we only get a constant number~$K$ of different types. 
Secondly, we continue with a
  structural analysis of the possible types~$R$ for graphs from $\cC(n,c)$ and
  infer from Lemma~\ref{lem:embed}
  that~$R$ cannot contain a triangle all of whose edges are grey (see
  Lemma~\ref{lem:GreyTriangle}). 
Thirdly, we  prove that a coloured graph without
  such a grey triangle can only serve as a type for at most $\UB(n)$ graphs
  on~$n$ vertices (see Lemma~\ref{lem:Counting}). Multiplying $\UB(n)$ with~$K$
  then gives the desired bound.

We start with the second step.

\begin{lemma}\label{lem:GreyTriangle}
  For every integer $k'\ge 5$ and every $d>0$ there exist
  $\eps_{\subref{lem:GreyTriangle}},\eps'_{\subref{lem:GreyTriangle}}>0$ such
  that for every $0<\eps\le\eps_{\subref{lem:GreyTriangle}}$ and every
  $0<\eps'\le\eps'_{\subref{lem:GreyTriangle}}$ the following is true. If~$G$ is
  a graph whose ($\eps,\eps',d,k'$)-type~$R$ contains three grey edges forming a
  triangle then $G$ has an induced~$C_5$.
\end{lemma}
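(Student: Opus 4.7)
The plan is to derive the conclusion directly from the embedding lemma, Lemma~\ref{lem:embed}. Let $\eps_{\subref{lem:GreyTriangle}}$ and $\eps'_{\subref{lem:GreyTriangle}}$ be the constants returned by Lemma~\ref{lem:embed} applied with the given $d$ and $k'\ge 5$; note that the requirement $f\le k'$ there will be met since we intend to embed $F=C_5$, hence $f=5$. Denote the vertices of the grey triangle in $R$ by $u,v,w$. It then suffices, by Lemma~\ref{lem:embed}, to exhibit for every colour assignment $\sigma(u),\sigma(v),\sigma(w)\in\{0,1\}$ a coloured homomorphism $h\colon C_5\colhom R$ whose image is contained in $\{u,v,w\}$.

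The key observation is that every edge of the triangle $uvw$ has colour $\tfrac12$ (grey), so both edges and non-edges of $C_5$ may be freely routed across any triangle-edge by conditions~\itmit{b} and~\itmit{c} of Definition~\ref{def:colhom}. Consequently the only genuine constraints arise from the same-vertex case: for each $x\in\{u,v,w\}$ the preimage $h^{-1}(x)$ must be a clique in $C_5$ if $\sigma(x)=1$, and an independent set if $\sigma(x)=0$. Since $\omega(C_5)=\alpha(C_5)=2$ no preimage may contain more than two vertices, and since $|V(C_5)|=5$ the preimage sizes must distribute as $(2,2,1)$ over $\{u,v,w\}$.

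It then suffices, for every possible pair of colours at the two ``paired'' target vertices (which we label $u$ and $v$, leaving $w$ as the solo target; this is allowed because the triangle is symmetric), to select two vertex-disjoint pairs of $V(C_5)=\{1,2,3,4,5\}$ (cyclically labelled) of the correct adjacency type. The four sub-cases are straightforward: for $(\sigma(u),\sigma(v))=(1,1)$ take the two disjoint edges $\{1,2\}$ and $\{3,4\}$; for $(0,0)$ take the non-edges $\{1,3\}$ and $\{2,4\}$; for $(1,0)$ take the edge $\{1,2\}$ and the non-edge $\{3,5\}$; $(0,1)$ is symmetric. In each case the fifth vertex is sent to $w$, whose colour is immaterial because $w$ has a single preimage. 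Verifying that each resulting $h$ is a coloured homomorphism reduces to the observation of the previous paragraph, and Lemma~\ref{lem:embed} then delivers the required induced $C_5$ in~$G$. The only mild subtlety is the bookkeeping around the (2,2,1) distribution; conceptually, a grey triangle is flexible enough to simulate $C_5$'s alternation of edges and non-edges, provided two of its vertices can accommodate the forced same-cluster adjacency constraints.
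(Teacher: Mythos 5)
Your proof is correct and follows essentially the same route as the paper: invoke the embedding lemma for induced subgraphs (Lemma~\ref{lem:embed}) after exhibiting a coloured homomorphism from $C_5$ into the grey triangle, which exists because grey edges impose no constraints and the only restrictions come from same-cluster preimages of size at most two. The paper organises the case analysis slightly differently (by pigeonhole it picks two triangle vertices of equal colour and maps two disjoint edges, respectively non-edges, of $C_5$ onto them), whereas you enumerate all four colour patterns of the two doubled-up targets, including the mixed case; this is an immaterial difference, and your only small imprecision is that Lemma~\ref{lem:embed} also takes the order $k$ of the type as input (the paper applies it with $k=3$, i.e.\ to the triangle itself), which you should state when fixing $\eps_{\subref{lem:GreyTriangle}},\eps'_{\subref{lem:GreyTriangle}}$.
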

\begin{proof}
  Given $d>0$, set $k:=3$ and $k':=5$, and let
  $\eps_{\subref{lem:GreyTriangle}},\eps'_{\subref{lem:GreyTriangle}}>0$ be the
  constants given by Lemma~\ref{lem:embed} for~$d$, $k$, and~$k'$. Let positive
  constants $\eps\le\eps_{\subref{lem:GreyTriangle}}$ and
  $\eps'\le\eps'_{\subref{lem:GreyTriangle}}$ be given. Let $G$ be a graph
  with ($\eps,\eps',d,k'$)-type~$R$ such that~$R$ contains a
  triangle~$T$ with three grey edges.

  By Lemma~\ref{lem:embed} the graph~$G$ contains an induced~$C_5$ if there
  exists a coloured homomorphism from~$C_5$ to~$T$, in which case we are
  done. We claim that such a coloured homomorphism~$h$ does always exist
  (regardless of the colours of the vertices of~$T$). Indeed, if~$T$ has at
  least two black vertices~$V_1$, $V_2$ then we can construct~$h$ by mapping a
  pair of adjacent vertices of~$C_5$ to~$V_1$, a disjoint pair of adjacent
  vertices of~$C_5$ to~$V_2$, and the remaining vertex of~$C_5$ to the remaining
  vertex of~$T$.  If~$T$ has at least two white vertices~$V_1$, $V_2$, on the
  other hand, then we can construct~$h$ by mapping one pair of non-adjacent
  vertices of~$C_5$ to~$V_1$, a disjoint pair of non-adjacent vertices of~$C_5$
  to~$V_2$ and the remaining vertex of~$C_5$ to the remaining vertex of~$T$.
\end{proof}

Next, we show an upper bound on the number of graphs on~$n$ vertices with a
fixed type~$R$, where~$R$ does not contain a triangle with three grey edges.
We use the following definition.
\begin{equation}\label{eq:cR}
  \cR(R,\eps,\eps',d,k',n,c) := \big\{ G \in \cG(n,c)\colon 
        \text{ $G$ has $(\eps,\eps',d,k')$-type~$R$}\big\}. 
\end{equation}
We stress that $\cR(R,\eps,\eps',d,k',n,c)$ and $\cR(R',\eps,\eps',d,k',n,c)$
may have non-empty intersection for $R\neq R'$.

\begin{lemma} \label{lem:Counting} 
  For every $c$ with $0<c\le\frac12$, and every $\gamma>0$ there exist
  $\eps_{\subref{lem:Counting}},d_0>0$ and integers $n_{\subref{lem:Counting}}$,
  $k_0$ such that for all positive $d\le d_0$, $\eps\le
  \eps_{\subref{lem:Counting}}$, $\eps'$, and all integers $n\ge
  n_{\subref{lem:Counting}}$, $k\ge k_0$, $k'$ the following holds. If $R$ is a
  coloured graph of order $k$ which has at most $\eps k^2$ non-edges and does not
  contain a triangle with three grey edges, then
  \[
	|\cR(R,\eps,\eps',d,k',n,c)| \le 2^{h(c)\binom n2 + \gamma \binom n2}\; .
  \]
\end{lemma}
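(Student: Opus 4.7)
The plan is a three-step counting argument: first fix an $\eps$-regular partition of $[n]$ consistent with $R$; then bound the edge configurations location by location; finally optimise an entropy bound over the allowed edge totals.

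Step one: for each $G\in\cR(R,\eps,\eps',d,k',n,c)$ there is an $\eps$-regular partition $V(G)=V_0\dcup V_1\dcup\dots\dcup V_k$ of $[n]$ producing the type $R$; since each vertex lands in one of at most $k+1$ parts, the number of such partitions is at most $(k+1)^n=2^{O(n\log k)}$, which is $2^{o(n^2)}$ for fixed $k$ and so absorbed into the $\gamma\binom n2$ error. Step two: with $m=|V_i|$, edges incident to $V_0$ give at most $2^{\eps n^2}$ configurations; edges inside clusters at most $2^{n^2/(2k)}$ configurations (negligible for $k_0$ large); and a pair $(V_i,V_j)$ with $e_{ij}$ edges contributes at most $\binom{m^2}{e_{ij}}\le 2^{m^2 H(e_{ij}/m^2)}$ configurations. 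White and black pairs have $e_{ij}/m^2\in[0,d]\cup[1-d,1]$ and so contribute at most $2^{H(d)m^2}$ each, summing to at most $2^{H(d)\binom n2}$, which is negligible for $d$ small. Call the grey and non-edge pairs \emph{free}. Since $R$ contains no grey triangle, Tur\'an's theorem bounds the number of grey edges by $k^2/4$, and together with $p_N\le\eps k^2$ this gives $q:=p_G+p_N\le(\tfrac14+\eps)k^2$. By concavity of $H$, the joint free-pair contribution with total edge count $E_F$ is at most $2^{qm^2 H(E_F/(qm^2))}$.

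Step three is to maximise $qm^2 H(E_F/(qm^2))$ subject to $E_F\le c\binom n2$ (from the density constraint) and $E_F\in[0,qm^2]$. Setting $\alpha:=qm^2/\binom n2$, so $\alpha\le\tfrac12+O(\eps)$, a short computation yields
\[
  \frac{d}{d\alpha}\bigl(\alpha H(c/\alpha)\bigr) \;=\; -\log(1-c/\alpha) \;>\; 0,
\]
so $f(\alpha):=\alpha H(c/\alpha)$ is strictly increasing on its domain. If $c\le\tfrac14$, then at $\alpha=\tfrac12+O(\eps)$ the ratio $c/\alpha$ is at most $\tfrac12$, so $H$ is increasing on the feasible range $[0,c/\alpha]$ and the free contribution is bounded at the maximal $\alpha$ by $\tfrac12 H(2c)\binom n2+O(\eps)\binom n2=h(c)\binom n2+O(\eps)\binom n2$ by continuity of $H$. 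If $c\in(\tfrac14,\tfrac12]$ we use the trivial bound $qm^2\cdot H(1/2)=\alpha\binom n2\le(\tfrac12+O(\eps))\binom n2$, which matches $h(c)=\tfrac12$. Summing the contributions from all location types and choosing $d$, $\eps$ sufficiently small and $k_0$ sufficiently large yields the claimed bound $|\cR(R,\eps,\eps',d,k',n,c)|\le 2^{h(c)\binom n2+\gamma\binom n2}$.

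The main obstacle is to choose the constants so that the accumulated error terms---from counting partitions, from the $H(d)$-bound on white and black pairs, from cluster interiors and the exceptional set, from the $\eps$-slack in $\alpha$, and from the continuity loss $H(2c/(1+O(\eps)))\to H(2c)$---together fit into $\gamma\binom n2$, while the entropy-concavity step remains tight enough to recover the precise value $h(c)$. The case split $c\le\tfrac14$ versus $c>\tfrac14$ is forced by the shape of $h$: it reflects whether the constraint $E_F\le c\binom n2$ or the entropy ceiling $H\le 1$ is the binding one.
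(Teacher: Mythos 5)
Your proposal is correct and follows essentially the same route as the paper's proof: enumerate equipartitions, absorb the exceptional set, intra-cluster edges, non-edge pairs and the $H(d)$-contribution of white/black pairs into the error term, bound the grey pairs by $k^2/4$ via Tur\'an's theorem (no grey triangle), and finish with an entropy bound split at $c=\tfrac14$. The only cosmetic differences are that the paper handles non-edge pairs crudely (together with $V_0$) rather than folding them into the ``free'' pairs, and bounds the grey-pair count directly by $\binom{n^2/4}{c\binom n2}$ (resp.\ $2^{n^2/4}$) instead of carrying out your explicit maximisation of $\alpha H(c/\alpha)$ over $\alpha$.
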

\begin{proof}
  Let $c, \gamma$ be given. Choose $\eps_{\subref{lem:Counting}}$, $d_0$, $k_0$
  such that $\max\{4\eps_{\subref{lem:Counting}}, H(d_0), \tfrac{1}{k_0}\} \le
  \tfrac{\gamma}{5}$. Let $n_{\subref{lem:Counting}}$ be large enough to
  guarantee $\log(n+1) \le \tfrac{\gamma}{5} (n-1)$. Let
  $\eps\le\eps_{\subref{lem:Counting}}$, $\eps'$, $d\le d_0$, $n\ge n_0$, $k\ge
  k_0$, $k'$ be given.

  Let~$R=([k],E_R,\sigma)$ be a coloured graph which has at most $\eps k^2$
  non-edges and does not contain a triangle with three grey edges.
  We shall count the number of graphs in $\cR(R,\eps,\eps',d,k',n,c)$ by
  estimating the number of equipartitions $V_0\dcup\dots\dcup V_k$ of $[n]$,
  the number of choices for edges with one end in the exceptional set~$V_0$ and
  edges in pairs $(V_i,V_j)$ such that $\{i,j\}\not\in E_R$, 
  the number of choices for edges in clusters~$V_i$ such that $i$ is white or
  black in~$R$,
  and the number of choices for at most $c\binom n2$ edges
  in pairs $(V_i,V_j)$ such that $\{i,j\}$ is a white, black, or grey edge of~$R$.

  The number of equipartitions $V_0\dcup\dots\dcup V_k$ of~$[n]$ is bounded by
  \begin{equation} \label{eq:Count0}
    (k+1)^n = 2^{n \log (k+1)} \le 2^{\frac{\gamma}5\binom n2}. 
  \end{equation}
  Let us now fix such an equipartition.
  There are at most $\eps n^2$ possible edges that have at
  least one end in $V_0$ and at most 
  $\tfrac{\eps}{2} n^2$ possible edges in pairs $(V_i,V_j)$ such that
  $\{i,j\}\not\in E_R$. Thus there are at most
  \begin{equation} \label{eq:Count1} 
    2^{\frac 32 \eps n^2 } \le 2^{4\eps\binom n2}
  \end{equation}
  possible ways to distribute such edges.  In addition, the number of ways to
  distribute edges in clusters $V_i$ corresponding to white or black vertices of
  $R$ is at most
  \begin{equation} \label{eq:Count3}
    2^{k\binom{n/k}{2}} \le 2^{\frac 1k \binom n2}
  \end{equation}
  By definition, white edges of an ($\eps,\eps',d,k'$)-type correspond to pairs
  with density at most~$d$ and black edges correspond to pairs with density at
  least $(1-d)$. Hence, by the symmetry of the binomial coefficient the number of
  ways to distribute edges in pairs $(V_i,V_j)$ such that $\{i,j\}$ is a white or
  a black edge of~$R$ is at most
  \begin{equation} \label{eq:Count2}
    \binom{\left(\frac nk\right)^2}{d\left(\frac nk\right)^2}^{\binom k2} \le
    2^{\binom k2 \left(\frac nk\right)^2 H(d)} \le 2^{H(d)\binom n2} \,.
  \end{equation}
  For later reference we now sum up the estimates obtained so far. The
  product of \eqref{eq:Count0}--\eqref{eq:Count2} gives less than
  \begin{equation}\label{eq:count:prod}
    2^{\bleft( \frac{\gamma}{5}  + 4\eps + H(d) + \frac 1k \right) \binom n2}
    \le 2^{\frac45\gamma\binom n2}
  \end{equation}
  choices for the partition $V_0\dcup\dots\dcup V_k$ and for the
  distribution of edges inside such a partition, besides to the pairs
  $(V_i,V_j)$ corresponding to grey edges of~$R$.

  It remains to take the grey edges~$E_g$ of~$R$ into account.
  By assumption $E_g$ does not contain a triangle.
  Hence, by Tur\'an's Theorem (see, e.g.,
  \cite{ExtremalProblemGraphTheory}) we have
  $|E_g|\le\tfrac{k^2}{4}$. It follows that there are at most
  $\tfrac{k^2}{4}\left(\tfrac nk\right)^2 = \tfrac{n^2}{4}$ possible places for
  edges in $E_g$-pairs $(V_i,V_j)$, i.e., pairs such that $\{i,j\}\in E_g$.
  Hence the number~$N_g$ of possible ways to distribute at most $c\binom n2$
  edges to $E_g$-pairs is at most $\binom{n^2/4}{c\binom n2}$.
  If $c<\tfrac14$ then this gives
  \begin{equation} \label{eq:Count4}
     N_g\le 2^{\frac 12 \binom{n}{2} H(2c) +\gamma\binom n2} \,,
  \end{equation}
  and if $\tfrac 14 \le c \le \tfrac 12$ then 
  \begin{equation} \label{eq:Count5}
    N_g \le 2^{\frac{n^2}4} \le 2^{\frac 12 \binom{n}{2} +
      \frac{\gamma}{5} \binom n2}\;.
  \end{equation}
  Combining~\eqref{eq:Count4} and \eqref{eq:Count5} and recalling the definition
  of $h(c)$ in~\eqref{eq:h} gives
  \begin{equation} \label{eq:Count6}
    N_g \le 2^{h(c)\binom{n}{2} + \frac{\gamma}{5} \binom n2}\; .
  \end{equation}
  Multiplying~\eqref{eq:count:prod}
  and~\eqref{eq:Count6} gives the desired upper bound
  \begin{equation*}
    |\cR(R,\eps,\eps',d,k',n,c)| \le 2^{h(c)\binom n2 + \frac{\gamma}{5}
      \binom n2 + \frac{4}{5}\gamma \binom n2} 
    = 2^{h(c)\binom n2 + \gamma \binom n2}\; .
  \end{equation*}
\end{proof}

With this we are in position to prove Lemma~\ref{lem:upper}.

\begin{proof}[Proof of Lemma~\ref{lem:upper}]
  Observe first that it suffices to prove the lemma for $c\le\frac12$, since the
  complement of a graph without induced~$C_5$ is induced $C_5$-free and hence
  $|\cC(n,c)|=|\cC(n,1-c)|$.

  Now let $c\in(0,\frac12]$ and $\gamma>0$ be given. Set~$k'=5$.
  Lemma~\ref{lem:Counting} with input $c$ and $\gamma/2$ provides constants
  $\eps_{\subref{lem:Counting}}$, $d_0$, $n_{\subref{lem:Counting}}$, $k_0$.
  Set $d=d_0$. From Lemma~\ref{lem:GreyTriangle} with input~$d$ we obtain
  constants $\eps_{\subref{lem:GreyTriangle}}$ and $\eps'_0$.  Set
  $\eps:=\min\{\eps_{\subref{lem:Counting}},\eps_{\subref{lem:GreyTriangle}}\}$
  and $\eps':=\eps'_0$. The type lemma, Lemma~\ref{lem:type}, finally, with input
  $\eps$, $\eps'$, and $k_0$, $k'$ gives constants $k_1$ and
  $n_{\subref{lem:type}}$. We set $n_0=\max\{n_{\subref{lem:type}},n_{\subref{lem:Counting}},
  \frac{3}{\sqrt{\gamma}}k_1\}$.

  Now, for each graph~$G\in\cC(n,c)$ we apply the type lemma,
  Lemma~\ref{lem:type}, with parameters $\eps$, $\eps'$, $k_0$, $k'$ and $d$ and
  obtain an $(\eps,\eps',d,k')$-type $R$ of~$G$ on $k\le k_1$ vertices and with
  at most $\eps k^2$ non-edges.  Let~$\tilde\cR$ be the set of types obtained from these
  applications of Lemma~\ref{lem:type}.  It follows that $|\tilde\cR|\le
  4^{\binom{k_1}{2}}2^{k_1}\le 2^{k_1^2}$. 
  By Lemma~\ref{lem:GreyTriangle} applied with $d$, $\eps$, and $\eps'$, no
  coloured graph in~$\tilde\cR$ contains a triangle with three grey edges. Hence by
  Lemma~\ref{lem:Counting} applied with $c$, $\gamma/2$, $\eps$, $\eps'$ and $d$ we have
  $|\cR(R,\eps,\eps',d,n,c)| \le 2^{h(c)\binom n2 + \frac12\gamma \binom n2}$. Since,
  by Lemma~\ref{lem:type},
  \[ 
        \cC(n,c) \subset \bigcup_{R\in\tilde\cR}
        \cR(R,\eps,\eps',d,c,n)\;. 
   \] 
   we conclude from the choice of~$n_0$ that
   \begin{equation*}
     |\cC(n,c)|
     \le 2^{k_1^2} \cdot 2^{h(c)\binom n2 + \frac12 \gamma
     \binom n2} \le 2^{h(c)\binom n2 + \gamma \binom n2}
     \,.
   \end{equation*}
\end{proof}

\section{The proof of Theorem~\ref{thm:C5LinHom}}
\label{sec:linear}

Our proof of Theorem~\ref{thm:C5LinHom} consists of the following steps.  We
start, similarly as in the proof of Theorem~\ref{thm:main}, by constructing for
each graph~$G$ in $\Forb^*_{n,\eta}(C_5,c)$ a type~$R$ of size independent
of~$n$ with the help of the type lemma, Lemma~\ref{lem:type}. Next, we consider
each cluster~$V_i$ of a partition of $V(G)$ corresponding to~$R$ separately. We
shall show that the fact that~$G$ does not contain homogeneous sets of size
$\eta n$ implies that~$G[V_i]$ has many vertex disjoint induced copies of $P_3$,
the path on three vertices, or many vertex disjoint induced copies of the
anti-path $\antiP$, the complement of~$P_3$ (see
Lemma~\ref{lem:P3orNotP3}). Many induced copies of~$P_3$ or~$\antiP$ in two
clusters~$V_i$ and~$V_j$, however, limit the number of possibilities to insert
edges between~$V_i$ and~$V_j$ without inducing a~$C_5$ (see
Lemma~\ref{lem:C5Probability}). Combining this with the proof strategy from
Theorem~\ref{thm:main} will give us an upper bound for the number of graphs from
$\Forb^*_{n,\eta}(C_5,c)$ with type~$R$ (see Lemma~\ref{lem:NoLinHomSet}).
Finally, comparing this upper bound with the lower bound on
$|\Forb^*_{n}(C_5,c)|$ from Theorem~\ref{thm:main} will lead to the desired
result.

\smallskip

We start by proving that graphs without big homogeneous sets contain many vertex
disjoint induced~$P_3$ or~$\antiP$.

\begin{lemma}\label{lem:P3orNotP3}
  Let $G$ be a graph of order $n$ with $\hom(G) \le n/6$. Then one of the
  following is true.
  \begin{enumerate}[label=\rom]
    \item $G$ contains $n/6$ vertex disjoint induced copies of $P_3$, or
    \item $G$ contains $n/6$ vertex disjoint induced copies of $\antiP$.
  \end{enumerate}
\end{lemma}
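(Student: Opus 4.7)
The plan is to let $\mathcal F$ be a maximum family of pairwise vertex-disjoint induced copies of $P_3$ in $G$. If $|\mathcal F| \ge n/6$ then case (i) is immediate, so I assume $|\mathcal F| < n/6$ and set $S := V(G) \setminus V(\mathcal F)$, so that $|S| > n/2$. The maximality of $\mathcal F$ forces $G[S]$ to be induced $P_3$-free, and a standard observation---if $v \in S$ has two non-adjacent neighbours $u_1, u_2$ in $S$ then $\{u_1, v, u_2\}$ is an induced $P_3$, and if $v \in S$ has a neighbour $u$ and a non-neighbour $w$ in $S$ with $u \sim w$ then $\{v, u, w\}$ is an induced $P_3$---forces $G[S]$ to be a disjoint union of cliques $A_1 \sqcup \cdots \sqcup A_r$. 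Each $A_i$ is a clique in $G$, hence a homogeneous set, so $|A_i| \le \hom(G) \le n/6$; a transversal of the $A_i$'s is an independent set in $G$, so also $r \le \hom(G) \le n/6$.

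It remains to pack $n/6$ vertex-disjoint induced copies of $\antiP$ inside $G[S]$. Every such $\antiP$ consists of an edge in some clique $A_i$ together with a single vertex from another clique $A_j$ with $j \ne i$. The plan is to assign to each $A_i$ an integer number $a_i$ of pairwise vertex-disjoint pairs and $b_i$ of singletons, with $2a_i + b_i \le |A_i|$, and then to match each pair from $A_i$ with a singleton from some $A_j$, $j \ne i$. By Hall's theorem such a matching of size $N := \sum_i a_i = \sum_i b_i$ exists precisely when $a_i + b_i \le N$ for each $i$. The fractional choice $a_i = b_i = |A_i|/3$ achieves $N = |S|/3$ and reduces Hall's condition to $|A_i| \le |S|/2$, which holds with ample slack since $|A_i| \le n/6$ and $|S|/2 > n/4$.

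For the integer version, I first extract $\lfloor |A_i|/2 \rfloor$ disjoint pairs from each $A_i$, yielding $P := \sum_i \lfloor |A_i|/2 \rfloor$ pairs and $L := \sum_i (|A_i| \bmod 2) \le r$ leftover singletons, with $2P + L = |S|$ and, since $P = (|S|-L)/2 > n/6 \ge L$, also $P > L$. ``Breaking'' $K := \lfloor (P-L)/3 \rfloor$ of the pairs into pairs of singletons balances the two sides: the resulting $P - K$ pairs and $L + 2K$ singletons have minimum equal to $\lfloor (2P+L)/3 \rfloor = \lfloor |S|/3 \rfloor$, which is $\ge \lfloor n/6 \rfloor$ thanks to $|S| > n/2$. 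The broken pairs can be distributed across cliques so as to satisfy the Hall condition $\lceil |A_i|/2 \rceil + k_i \le N$, which is easy because $|A_i|, r \le n/6$ while $N \ge \lfloor n/6 \rfloor$ leaves abundant room. The resulting matching then yields $\lfloor n/6 \rfloor$ vertex-disjoint induced $\antiP$'s in $G[S]$, placing us in case (ii).

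The main obstacle is this final packing step: the fractional LP bound $|S|/3$ is essentially trivial, but confirming that the integer matching and the Hall condition can be simultaneously maintained after rounding requires some care. The comfortable gap between the structural bounds $|A_i|, r \le n/6$ and the size $|S| > n/2$ is precisely what provides the slack needed to absorb all the integer rounding.
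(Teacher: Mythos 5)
Your proof is correct and takes essentially the same route as the paper: a maximal induced-$P_3$ packing, the observation that the uncovered vertices span a disjoint union of at most $n/6$ cliques each of size at most $n/6$ (both bounds from $\hom(G)\le n/6$), and then a Hall-type matching of intra-clique edges with vertices from other cliques to produce the disjoint induced $\antiP$'s. The only difference is bookkeeping in the final step (you extract all $\lfloor|A_i|/2\rfloor$ pairs and rebalance by breaking some into singletons, whereas the paper greedily picks exactly $n/6$ edges and matches the exactly $n/6$ leftover vertices to them in an auxiliary bipartite graph); both versions go through, with the same mild informality about floors that the paper itself permits.
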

\begin{proof}
  Let~$G$ be an $n$-vertex graph with $\hom(G) \le n/6$. Select a maximal set of
  disjoint copies of $P_3$. If this set consists of less than $n/6$ paths then
  there is a subgraph $G'\subseteq G$ with $v(G')=n/2$ that has no induced
  $P_3$ and thus is a vertex disjoint union of cliques $Q_1,\dots,Q_\ell$.
  We claim that in~$G'$ we can find $n/6$ vertex disjoint
  induced~$\antiP$, which proves the lemma.

  Indeed, since $\hom(G)\le n/6$ we have $\ell\le n/6$ and for each $i\in[\ell]$
  we have $q_i:=|Q_i|\le n/6$.  This implies $\sum_{i\in[\ell]} \lfloor q_i/2
  \rfloor\ge \frac12(\frac n2-\frac n6)=n/6$. It follows that we can find a set
  of $n/6$ vertex disjoint edges $E=\{e_1,\dots,e_{n/6}\}$ in these cliques in
  the following way.
  We first choose as
  many vertex disjoint edges in~$Q_1$ as possible, then in~$Q_2$, and so on,
  until we chose $n/6$ edges in total. Let~$Q_k$ be the last clique used in this
  process. Then for each clique~$Q_i$ with $i<k$ at most one vertex was unused
  in this process, and in~$Q_k$ possibly several vertices were unused. Let~$X$
  be the set of all these unused vertices together with all
  vertices from $\bigcup_{k<i\le\ell}Q_i$. Clearly $|X|=n/6$.

  We consider the auxiliary bipartite graph $B=(X\cup E, E_B)$ with
  $\{x,e\}\in E_B$ for $x\in X$ and $e\in E$ iff~$x$ and~$e$ do not lie in the
  same clique of~$G'$. Clearly, for each vertex~$x\in Q_i$ with $i\in[\ell]$
  there are at most $(n/6)/2$ edges from~$E$ which lie in~$Q_i$. Hence
  $\deg_B(x)\ge \frac12|E|$ for all~$x\in X$, and $|X|=|E|$. It follows that~$B$
  has a perfect matching, which means that there are $n/6$ vertex disjoint
  induced~$\antiP$ in~$G'$ as claimed.
\end{proof}

Now suppose we are given a graph~$G$ with vertex set $V_1\dcup V_2$ and no edges
between $V_1$ and $V_2$. Let further $H_1$ and $H_2$ be such that for $i\in[2]$
the graph~$H_i$ induces a copy of~$P_3$ or~$\antiP$ in $G[V_i]$. Observe that,
no matter which combination of $P_3$ or $\antiP$ we choose, we can create an
induced~$C_5$ in~$G$ by adding appropriate edges between~$H_1$ and~$H_2$.  Since
we are interested in graphs without induced~$C_5$ this motivates why we call
$(H_1,H_2)$ a \emph{dangerous pair} of $(V_1,V_2)$.

Our next goal is to use these dangerous pairs in order to derive an upper bound
on the number of possibilities to insert edges between~$V_1$ and~$V_2$ without
creating an induced copy of~$C_5$ if we know that $(V_1,V_2)$ contains many
dangerous pairs. In order to quantify this upper bound in
Lemma~\ref{lem:C5Probability} we use the following technical definition.
We define $R(c)=c^4(1-c)^4$ and the function $r:(0,1)\to\R^+$ with
\begin{equation}\label{eq:defr}
 r(c) = \frac{1}{72}\begin{cases} R(2c) & \text{if $c<\tfrac 14$,}\\        
        (1/4)^{4} & \text{if $c\in [\tfrac 14,\tfrac 34]$,}\\
 		R(2c-1) & \text{otherwise.} \end{cases} 
\end{equation}
Recall in addition the definition of the function~$h(c)$ from~\eqref{eq:h}.


\begin{lemma}\label{lem:C5Probability} 
  For every $0<c_0\le\frac12$ there is~$n_0$ such that for all~$c$ with $c_0\le
  2c\le 1-c_0$ and $n\ge n_0$ the following holds. Let $G_1=(V_1,E_1)$ and
  $G_2=(V_2,E_2)$ be two $n$-vertex graphs, each of which contains $n/6$ vertex
  disjoint induced copies of $P_3$ or $n/6$ vertex disjoint induced copies of
  $\antiP$. Let~$G=(V_1\dcup V_2,E)$ be the disjoint union of~$G_1$ and~$G_2$.
  Then there are at most 
  \[2^{2n^2\bleft( h(c)-r(c) \right)}\] 
  ways to add exactly $2c n^2$ edges to~$G$ that run between $V_1$ and $V_2$
  without inducing a $C_5$ in $G$.
\end{lemma}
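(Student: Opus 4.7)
The plan is to exploit a block decomposition of the bipartite pairs induced by the supplied disjoint $P_3$- or $\antiP$-copies. First, fix $n/6$ vertex-disjoint triples $I_1,\dots,I_{n/6}\subset V_1$ and $J_1,\dots,J_{n/6}\subset V_2$, each inducing $P_3$ or $\antiP$. For every pair $(i,j)$ the nine bipartite pairs in $I_i\times J_j$ form a \emph{block}, and the $B=n^2/36$ blocks are pairwise disjoint subsets of the $n^2$ possible bipartite pairs. A short case analysis of the four combinatorial types $(H_1,H_2)\in\{P_3,\antiP\}^2$ (essentially the observation made just before the lemma) produces in each block an explicit \emph{bad} bipartite pattern on at most six of its nine pairs which, together with the fixed internal edges of $H_1$ and $H_2$, induces a $C_5$ on five block vertices. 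For instance, if $H_1=H_2=P_3$ with edges $ab,bc$ and $xy,yz$, the cycle $a\text{-}b\text{-}c\text{-}x\text{-}y\text{-}a$ is induced precisely when $cx,ay$ are bipartite edges and $ax,bx,by,cy$ are bipartite non-edges. In each of the four cases the bad pattern can be chosen so that it uses at most four constrained edges and at most four constrained non-edges; consequently, in the random bipartite graph $G(n,n,2c)$ its probability is at least $(2c)^{4}(1-2c)^{4}=R(2c)$.

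Since distinct blocks are supported on disjoint bipartite pair-sets, the events \emph{``block $b$ contains its bad pattern''} are mutually independent under $G(n,n,2c)$, so
\[
\Pr_{G(n,n,2c)}[\text{no induced }C_5]\le\prod_{b=1}^{B}\Pr[\text{block $b$ avoids its bad pattern}]\le\bigl(1-R(2c)\bigr)^{B}\le\exp\!\bigl(-R(2c)\,n^2/36\bigr).
\]
To pass from this probability to a counting estimate at exactly $2cn^2$ bipartite edges, I would use the elementary identity
\[
\bigl|\{\text{safe graphs with }|E|=2cn^{2}\}\bigr|\cdot(2c)^{2cn^{2}}(1-2c)^{(1-2c)n^{2}}=\Pr_{G(n,n,2c)}\!\bigl[\text{safe and }|E|=2cn^{2}\bigr]\le\Pr_{G(n,n,2c)}[\text{safe}],
\]
together with $(2c)^{2cn^{2}}(1-2c)^{(1-2c)n^{2}}=2^{-n^{2}H(2c)}$, to obtain
\[
\bigl|\{\text{safe}\}\bigr|\le 2^{\,n^{2}H(2c)-R(2c)\,n^{2}/(36\ln 2)}.
\]

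It remains to check that this is at most $2^{2n^{2}(h(c)-r(c))}$. When $c\le 1/4$ we have $2h(c)=H(2c)$ and $2r(c)=R(2c)/36$, and the derived bound even has the strictly stronger factor $1/(36\ln 2)>1/36$. When $c\in[1/4,1/2)$ we have $2h(c)=1$ and $2r(c)=R(1/2)/36$, and the required inequality reduces to
\[
\bigl(1-H(2c)\bigr)+R(2c)/(36\ln 2)\ge R(1/2)/36,
\]
which is tight at $c=1/4$ (where $1-H(2c)=0$ and $R(2c)=R(1/2)$) and is easy to verify elsewhere using that $R$ is maximised at $1/2$ while $1-H(2c)$ grows as $c$ moves away from $1/2$. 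The case $2c>1$ is excluded by hypothesis, and in any case follows by complementation. The main technical obstacle is the case analysis locating, in every combination of $P_3$/$\antiP$ types, a bad pattern with at most four constrained edges and at most four constrained non-edges; once $R(2c)$ appears as a uniform lower bound on the per-block bad probability, the probabilistic step and the final inequality chasing are routine.
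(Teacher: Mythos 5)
Your proof is correct and is essentially the paper's own argument: the same decomposition into $n^2/36$ vertex-disjoint dangerous blocks arising from the $P_3$/$\antiP$ copies, the same per-block lower bound $(2c)^4(1-2c)^4$ for creating an induced $C_5$ in the binomial bipartite model $\cG(V_1,V_2,2c)$ with independence across blocks, and the same entropy-based transfer to the exact-edge-count model. Your deviations are only cosmetic: you transfer via the exact probability $2^{-n^2H(2c)}$ of a fixed $2cn^2$-edge bipartite graph instead of the paper's $\Prob[B\mid A]\le\Prob[B]/\Prob[A]$ with $\Prob[A]\ge n^{-2}$, and you treat $c\in[\tfrac14,\tfrac12)$ by a direct (correct and easily verified) inequality rather than the paper's complementation reduction; note only that this inequality is not actually tight at $c=\tfrac14$, where you still have the slack $1/(36\ln 2)>1/36$.
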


We remark that in the proof of this lemma we are going to make use of the
following probabilistic principle: We can \emph{count} the number of elements in
a finite set~$X$ which have some property~$P$, by determining the
\emph{probability} that an element which is chosen from~$X$ uniformly at random
has property~$P$.

\begin{proof}[Proof of Lemma~\ref{lem:C5Probability}]
  Given~$c_0\in(0,\frac12]$ let $n_0$ be sufficiently large such that
  \begin{equation}\label{eq:c_0}
    n_0^2 e^{- 2r(c_0/2) n_0^2} \le 2^{-2r(c_0/2) n_0^2}\,.
  \end{equation}
  Now let~$c$ be such that $c_0\le 2c\le 1-c_0$. Observe first that it suffices
  to prove the lemma for $2c\le\frac12$, since induced $C_5$-free graphs are
  self-complementary and~$P_3$ is the complement of~$\antiP$. Hence, we assume
  from now on that $2c\le\frac12$.  Observe moreover, that~\eqref{eq:c_0}
  remains valid if~$c_0$ is replaced by~$c$ since~$r(c)$ is monotone increasing
  in~$[0,\frac12]$.  Let~$G_1$, $G_2$, and $G$ be as required.

  Our first goal is to estimate the probability $P^*$ of inducing no~$C_5$
  in~$G$ when choosing uniformly at random exactly $2c n^2$ edges between $V_1$
  and $V_2$.  Instead of dealing with~$P^*$ directly, we consider the following
  binomial random graph $\cG(V_1,V_2,p)$ with $p=2c\,$: we start with~$G$ and add
  each edge between $V_1$ and $V_2$ independently with probability~$p$.

  Now, let~$A$ be the event that $\cG(V_1,V_2,p)$ contains exactly $2cn^2$ edges
  between~$V_1$ and~$V_2$, and let~$B$ be the event that~$\cG(V_1,V_2,p)$
  contains no induced~$C_5$. Observe that each graph with~$2cn^2$ edges
  between~$V_1$ and~$V_2$ is equally likely in~$\cG(V_1,V_2,p)$ and thus
  \begin{equation} \label{eq:P*}
    P^* \le \Prob[B|A] \le \frac{\Prob[B]}{\Prob[A]} \,.
  \end{equation}
  Hence it suffices to estimate~$\Prob[A]$ and~$\Prob[B]$.

  We first bound~$\Prob[B]$.
  By assumption there are at least $n^2/36$ dangerous pairs in
  $(V_1,V_2)$. Now fix such a dangerous pair $(H_1,H_2)$.
  The probability that $(H_1,H_2)$ induces a~$C_5$ in $\cG(V_1,V_2,p)$
  is at least $p^2(1-p)^4$ unless~$H_1$ and~$H_2$ are both~$\antiP$, and at least
  $p^4(1-p)^2$ unless~$H_1$ and~$H_2$ are both~$P_3$.
  Thus $(H_1,H_2)$ induces a copy of $C_5$ with probability at least
  \begin{equation*}
    p^4(1-p)^4 
    = (2c)^4(1-2c)^4
    \geByRef{eq:defr} 72\cdot r(c) \,.   
  \end{equation*}
  Since we can lower bound the probability of~$B$ by the probability that none of the $n^2/36$ dangerous
  pairs in $(V_1,V_2)$ induces a~$C_5$ in $\cG(V_1,V_2,p)$ we obtain
  \begin{equation*}
    \Prob[B] \le \big(1- 72\cdot r(c)\big)^{n^2/36}  \le e^{-2r(c) n^2}.
  \end{equation*}
  Moreover, since the expected number of edges between~$V_1$ and~$V_2$
  in~$\cG(V_1,V_2,p)$ is $2cn^2$ we clearly have $\Prob[A] \ge
  n^{-2}$. By the choice of~$n_0$, combining this with~\eqref{eq:P*} gives
  \begin{equation} \label{eq:C5Prob1} 
    P^* \le \frac{e^{- 2r(c) n^2}}{n^{-2}} \leByRef{eq:c_0} 2^{-2r(c) n^2}.
  \end{equation}

  It remains to estimate the number~$N$ of ways to
  choose exactly $2c n^2$ edges between~$V_1$ and~$V_2$. We have
  \begin{equation}\label{eq:C5Prob2} 
    N\
    \le \binom{n^2}{2c n^2} 
    \leByRef{eq:BinCoe} 2^{2h(c) n^2 } \,.
  \end{equation}
  This implies that the number of ways to add exactly $2c n^2$ edges to~$G$ that
  run between $V_1$ and $V_2$ without inducing a~$C_5$ is 
  \begin{equation*}
    P^*\cdot N 
    \leBy{\eqref{eq:C5Prob1},\eqref{eq:C5Prob2}}
    2^{-2r(c) n^2} \cdot 2^{2h(c) n^2 } \,.
  \end{equation*}
\end{proof}

Next, we want to show that Lemma~\ref{lem:C5Probability} allows us to derive an
upper bound on the number of graphs~$G$ such that (a) $G$ has no large
homogeneous sets and (b) $G$ has a fixed type~$R$ which does not
contain a triangle with three grey edges. Our aim is to obtain an upper bound
which is much smaller than the bound provided in Lemma~\ref{lem:Counting} for
the corresponding problem without restriction (a). Lemma~\ref{lem:NoLinHomSet}
states that this is possible. Recall for this purpose the definition of
$\cR(R,\eps,\eps',d,k',n,c)$ from~\eqref{eq:cR}.

\begin{lemma}\label{lem:NoLinHomSet}
  For every $c$ with $0<c\le\frac12$, and every
  $\gamma>0$ there exist $\eps_0,d_0>0$ and an integer $k_0$ such
  that for all integers $k_1\ge k_0$, $k'$ there is an integer $n_0$ such that
  for all positive $d\le d_0$, $\eps\le\eps_0$, $\eps'$, and all integers
  $n\ge n_0$ and $k_0\le k\le k_1$ the following holds. If $R$ is a coloured graph of
  order $k$ which has at most $\eps k^2$ non-edges and does not contain a
  triangle with three grey edges, and $\eta=1/(6k_1)$, then
  \[
    |\cR(R,\eps,\eps',d,k',n,c) \cap \Forb^*_{n,\eta}(C_5,c) | \le 
    2^{\bleft( h(c)-r(c) \right)\binom n2 + \gamma \binom n2} \,.
  \]
\end{lemma}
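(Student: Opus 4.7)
The plan is to mirror the proof of Lemma~\ref{lem:Counting} line by line, replacing only its crude entropy bound on the number of edge distributions inside grey pairs by the sharper estimate supplied by Lemma~\ref{lem:C5Probability}. The hypothesis $\hom(G)<\eta n$ is precisely what makes that lemma applicable to every grey pair, and, summed over all grey pairs, the per-pair saving will yield exactly the desired factor $2^{-r(c)\binom n2}$ in the exponent. By complementation ($C_5$ is self-complementary and both $h$, $r$ are symmetric about $1/2$) it suffices to treat $c\in(0,1/2]$. I choose $\eps_0,d_0>0$ and $k_0$ as in Lemma~\ref{lem:Counting} but also so that Lemma~\ref{lem:C5Probability} applies with $c_0=d_0$, and I set $\eta=1/(6k_1)$. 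This latter choice is the crucial one: for any $G\in\cR(R,\eps,\eps',d,k',n,c)\cap\Forb^*_{n,\eta}(C_5,c)$ and any cluster $V_i$ of the associated partition, one has $|V_i|=m\ge n/k_1=6\eta n$, hence $\hom(G[V_i])\le\hom(G)<\eta n\le m/6$, and Lemma~\ref{lem:P3orNotP3} supplies $m/6$ vertex disjoint induced copies of $P_3$ or of $\antiP$ inside every~$V_i$.

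The counting then factorises exactly as in Lemma~\ref{lem:Counting}: equipartitions, edges touching $V_0$ or lying in irregular pairs, edges inside clusters, and edges in white or black pairs together contribute at most $2^{(4\gamma/5)\binom n2}$ by the estimates~\eqref{eq:count:prod}. For the grey-pair contribution, write $E_g$ for the set of grey edges of $R$; since no grey triangle appears in $R$, Tur\'an gives $|E_g|\le k^2/4$. Fixing a density profile $(c_{ij})_{\{i,j\}\in E_g}$ with $c_{ij}\in[d,1-d]$ and applying Lemma~\ref{lem:C5Probability} to each pair $(V_i,V_j)$ with parameter $c_{ij}/2$ produces at most $2^{2m^2(h(c_{ij}/2)-r(c_{ij}/2))}=2^{2m^2\psi(c_{ij})}$ cross-edge configurations in that pair with no induced $C_5$ on $V_i\cup V_j$, where $\psi(x):=h(x/2)-r(x/2)$ equals $\tfrac12 H(x)-\tfrac{1}{72}\,x^4(1-x)^4$ on $[0,1/2]$ and the constant $\tfrac12-\tfrac{1}{72\cdot 256}$ on $[1/2,1]$. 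Multiplying over $E_g$ and summing over the at most $(m^2+1)^{|E_g|}=2^{o(n^2)}$ profiles gives $N_g\le 2^{o(n^2)}\cdot 2^{M}$, where $M:=\max\sum_{\{i,j\}\in E_g}2m^2\psi(c_{ij})$ is taken over profiles satisfying $\sum c_{ij}m^2\le c\binom n2$.

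The main obstacle is bounding $M$ by $(h(c)-r(c))\binom n2+o(n^2)$. I first verify that $\psi$ is concave on $[0,1]$: on $[1/2,1]$ it is constant, and on $[0,1/2]$ the second derivative $H''(x)/2=-1/(2\ln 2 \cdot x(1-x))\le -2/\ln 2$ dominates in absolute value the bounded polynomial correction $|(\tfrac{1}{72}x^4(1-x)^4)''|\le 1/288$; since both one-sided derivatives at $x=1/2$ vanish, concavity extends across $[0,1]$. Jensen's inequality then gives $\sum \psi(c_{ij})\le|E_g|\psi(\bar c)$ with $\bar c\le c\binom n2/(m^2|E_g|)$. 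Setting $X=m^2|E_g|\le n^2/4$, one needs to bound $2X\psi(\min(c\binom n2/X,\,1/2))$. A short case analysis finishes the job: for $c\in[1/4,1/2]$ one uses $\psi(\bar c)\le\psi(1/2)=h(c)-r(c)$ together with $2X\le n^2/2\le\binom n2+O(n)$; for $c\le 1/4$ one distinguishes $\bar c\le 1/2$ (then monotonicity of $\psi$ on $[0,1/2]$ and the identity $\psi(2c)=h(c)-r(c)$ yield the bound) and $\bar c>1/2$ (then the edge constraint forces $2X\le 4c\binom n2$, and the chord inequality $H(y)\ge 2y$ on $[0,1/2]$, combined with the precise calibration of the $1/72$ in $r(c)$, yields $4c\psi(1/2)\le h(c)-r(c)$). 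Putting all the bounds together gives $N_g\le 2^{(h(c)-r(c))\binom n2+(\gamma/5)\binom n2}$, which multiplied with the non-grey contribution completes the proof.
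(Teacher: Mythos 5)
Your proposal follows the paper's proof in its essentials: reuse the bookkeeping of Lemma~\ref{lem:Counting} for everything except the grey pairs, choose $\eta=1/(6k_1)$ so that Lemma~\ref{lem:P3orNotP3} applies inside every cluster, apply Lemma~\ref{lem:C5Probability} pair by pair to a fixed edge-count profile, and finish with concavity and Jensen. Where you genuinely diverge is the final averaging step: the paper first notes that the count is only increased by assuming the number of grey edges is the maximal $k^2/4$, so the average (half-)density is at most $(1+3\eps)c\le \tfrac34$, and then uses monotonicity of $h-r$ up to $\tfrac34$ together with $h'(x)\le\log(1/x)$ and~\eqref{eq:C5P:eps}; you keep $|E_g|$ arbitrary and optimize over $X=m^2|E_g|$, which avoids the $\eps$-slack and the WLOG, but forces you to treat the regime of few, very dense grey pairs that the paper's normalization removes.

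Two steps of that case analysis are not closed as written. In the sub-case $c\le\tfrac14$, $\bar c\le\tfrac12$ you invoke only ``monotonicity of $\psi$'', but what you actually need is that $X\mapsto X\psi\big(c\binom n2/X\big)$ is non-decreasing, equivalently that $\psi(t)/t$ is non-increasing; this does follow from the concavity you proved together with $\psi(0)=0$, so it is fillable but should be said. More seriously, in the sub-case $\bar c>\tfrac12$ your justification of $4c\,\psi(\tfrac12)\le h(c)-r(c)$ fails as stated: writing $y=2c$, inserting the chord bound $H(y)\ge 2y$ reduces the claim to $y^3(1-y)^4\le \tfrac1{128}$, but the left-hand side attains $27\cdot 256/7^7\approx 0.0084>\tfrac1{128}\approx 0.0078$ at $y=3/7$, so the chord inequality plus the calibration of the $1/72$ is not sufficient on its own. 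The inequality you need is nevertheless true (away from $c=\tfrac14$ the surplus $H(y)/2-y$ is of order $10^{-2}$ while the polynomial deficit is of order $10^{-6}$; at $c=\tfrac14$ it is tight), so this is a repairable gap, but it requires a quantitative lower bound on $H(y)/2-y$ away from $y=\tfrac12$ rather than the bare chord bound. Finally, a parameter point: you fix $c_0=d_0$ in Lemma~\ref{lem:C5Probability}, yet grey pairs may have density as low as $d<d_0$, so the lemma should be invoked with $c_0=d$, as the paper does.
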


In the proof of this lemma we combine the strategy of the proof of
Lemma~\ref{lem:Counting} with an application of Lemma~\ref{lem:P3orNotP3} to all
clusters of a partition corresponding to~$R$, and an application of
Lemma~\ref{lem:C5Probability} to regular pairs of medium density.
We shall make use of the following observation.

Using the definition of~$r(c)$ from~\eqref{eq:defr}, it is easy to check that
$f(c):=h(c)-r(c)$ is a concave function for $c \in (0,1)$. Thus~$f$ enjoys the
following property, which is a special form of Jensen's inequality (see, e.g.,
\cite{HarLitPol52}).

\begin{proposition}[Jensen's inequality]\label{lem:Convexity}
  Let $f$ be a concave function, $0<c<1$, $0<c_i<1$ for $i\in [m]$ and let
  $\sum_{i=1}^{m} c_i = m c$. Then 
  \[
	\sum\limits_{i=1}^{m} f(c_i) 
	\le m \cdot f(c) \,. 
  \eqed
  \]
\end{proposition}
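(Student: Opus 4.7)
The plan is to prove this standard discrete Jensen inequality by exploiting the supporting-line characterization of concavity on an open interval. Concretely, for the value $c\in(0,1)$ appearing in the statement I would produce constants $\alpha,\beta\in\R$ such that the affine function $L(x):=\alpha+\beta x$ satisfies $L(c)=f(c)$ and $L(x)\ge f(x)$ for every $x\in(0,1)$. This is a standard consequence of concavity: the one-sided derivatives $f'_-(c)$ and $f'_+(c)$ exist at every interior point $c$ of $(0,1)$ and satisfy $f'_-(c)\ge f'_+(c)$, so that any slope $\beta\in[f'_+(c),f'_-(c)]$ paired with intercept $\alpha:=f(c)-\beta c$ yields a supporting line.

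Applying this supporting line at the point $c$, the hypotheses $c_i\in(0,1)$ and $\sum_{i=1}^m c_i = mc$ give $f(c_i)\le L(c_i)$ for each $i\in[m]$, and summing yields
\[
\sum_{i=1}^m f(c_i)\le \sum_{i=1}^m(\alpha+\beta c_i) = m\alpha+\beta\sum_{i=1}^m c_i = m\alpha+\beta\cdot mc = mL(c) = m\,f(c),
\]
which is exactly the claimed inequality.

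If one prefers to avoid any reference to one-sided derivatives, the same statement can be obtained by straightforward induction on $m$. The case $m=2$ is precisely the two-point definition of concavity. For the inductive step, I would write the average as $c = \tfrac{1}{m}c_m+\tfrac{m-1}{m}\bar c$, where $\bar c:=\tfrac{1}{m-1}\sum_{i<m}c_i\in(0,1)$, apply two-point concavity to obtain $f(c)\ge\tfrac1m f(c_m)+\tfrac{m-1}{m}f(\bar c)$, and then invoke the inductive hypothesis on the $m-1$ points $c_1,\dots,c_{m-1}$ (whose mean is $\bar c$) to bound $f(\bar c)\ge\tfrac{1}{m-1}\sum_{i<m}f(c_i)$; combining these inequalities and multiplying by $m$ gives the claim.

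Either route is essentially routine, and there is no substantial obstacle. The only minor point requiring attention is to stay within the open interval $(0,1)$ on which $f$ is defined: in the supporting-line version this is automatic since $L$ is defined on all of $\R$, and in the inductive version it is immediate because any convex combination of points in $(0,1)$ again lies in $(0,1)$.
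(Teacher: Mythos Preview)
Your proof is correct; both the supporting-line argument and the induction on $m$ are standard and valid routes to this discrete form of Jensen's inequality. The paper, however, does not supply a proof at all: the proposition is stated as a well-known fact with a reference to Hardy, Littlewood and P\'olya and is closed immediately with a \textsc{qed} symbol. So there is nothing to compare beyond noting that you have written out in full what the authors take for granted.
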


\begin{proof}[Proof of Lemma~\ref{lem:NoLinHomSet}]
  Let $c,\gamma>0$ be given and choose $\eps_0$, $d_0$,
  $k_0$ and $n_0$ as in the proof of Lemma~\ref{lem:Counting}. Let~$d\le d_0$ and~$k_1$ be given and
  possibly increase~$n_0$ such that $n_0\ge 2k_1
  n_{\subref{lem:C5Probability}}$, where $n_{\subref{lem:C5Probability}}$ is
  the constant from Lemma~\ref{lem:C5Probability} with parameter~$d$, and
  such that
  \begin{equation}\label{eq:C5P:n0}
    \frac34 k_1^2\log n_0 + 2n_0 \le \frac{\gamma}{10} \binom{n_0}{2}
    \,.
  \end{equation} 
  If necessary decrease $\eps_0$ such that 
  \begin{equation}\label{eq:C5P:eps}
  	3\eps_0 \log \frac 1c \le \frac{\gamma}{10}\, .
  \end{equation}
  Let~$\eps\le\eps_0$, $\eps'$, $n\ge n_0$, and~$k$ with $k_0\le k\le k_1$ be
  given.

  Let~$R=([k],E_R,\sigma)$ be a coloured graph which has at most $\eps k^2$
  non-edges and does not contain a triangle with three grey edges.  In the
  proof of Lemma~\ref{lem:Counting} we counted the number of graphs in
  $\cR(R,\eps,\eps',d,k',n,c)$ by estimating the number of equipartitions
  $V_0\dcup\dots\dcup V_k$ of $[n]$, the number of choices for edges with
  one end in the exceptional set~$V_0$ and edges in pairs $(V_i,V_j)$ such
  that $\{i,j\}\not\in E_R$, the number of choices for edges inside
  clusters~$V_i$ (such that $i$ is white or black in~$R$), and the number of
  choices for at most $c\binom n2$ edges in pairs $(V_i,V_j)$ such that
  $\{i,j\}$ is a white, black, or grey edge of~$R$. Now we are interested
  in the number of graphs in $\cR(R,\eps,\eps',d,k',n,c) \cap
  \Forb^*_{n,\eta}(C_5,c)$. Clearly, we can use the same strategy, and it
  is easy to verify that the estimates in
  \eqref{eq:Count0}--\eqref{eq:Count2} and thus in~\eqref{eq:count:prod}
  from the proof of Lemma~\ref{lem:Counting} remain valid in this setting.
  Form now on, as in the proof of Lemma~\ref{lem:Counting}, we fix a
  partition $V_0\dcup\dots\dcup V_k$ of~$[n]$ and observe that also
  $m_g:=|E_g|\le\tfrac{k^2}{4}$ still holds for the grey edges~$E_g$
  in~$R$.  However, we shall now use Lemma~\ref{lem:C5Probability} to
  obtain an improved bound on the number of possible choices for edges in
  $E_g$-pairs $(V_i,V_j)$, and use this to replace~\eqref{eq:Count6} by a
  smaller bound on the number~$N_g$ of possible ways to distribute at most
  $c\binom n2$ edges to $E_g$-pairs.  Since in the following we do not rely
  on any interferences between different $E_g$-pairs, clearly~$N_g$ will be
  maximal if~$m_g$ is maximal, and hence we assume from now on that
  \begin{equation}\label{eq:C5P:mg}
    m_g=\tfrac{k^2}{4}
    \,.
  \end{equation}
  Let $s:=|V_1|=\cdots=|V_k|$ and observe that 
  \begin{equation}\label{eq:C5P:s}
    \frac{n}{k} \ge s\ge(1-\eps)\frac{n}{k} \ge n_{\subref{lem:C5Probability}}
    \,.
  \end{equation}
  By Lemma~\ref{lem:P3orNotP3}, for each cluster~$V_i$ of a partition~$P$ of a graph
  in $\cR(R,\eps,\eps',d,k',n,c) \cap \Forb^*_{n,\eta}(C_5,c)$ such that~$P$
  corresponds to~$R$, we have that~$V_i$ contains either $s/6$ copies of~$P_3$
  or $s/6$ copies of~$\antiP$. Hence we will assume from now on, that in our
  fixed partition the clusters~$V_i$ have this property.

  We now upper bound~$N_g$ by multiplying the possible ways~$A$ to assign at
  most $c\binom n2$ edges to one $E_g$-pair each, and the maximum number~$B$ of
  ways to choose all these assigned edges in the corresponding pairs,
  without inducing a~$C_5$. First observe that we have
 \begin{equation}\label{eq:CP5:A}
   A\le \bigg(c\binom n2\bigg)^{m_g+1} \le n^{3m_g} \leByRef{eq:C5P:mg} 2^{\frac 34
     k^2\log n}
   \,.
 \end{equation}
 For estimating~$B$, we now assume that we fixed an assignment in which
 each pair $(V_i,V_j)$ with $\{i,j\}\in E_g$ is assigned $2c_{ij}s^2$
 edges. Let $\hat c$ be such that
 \begin{equation}\label{eq:CountFine0}
   \sum_{\{i,j\}\in E_g} 2c_{ij} s^2 =: \hat c n^2 \le c \binom n2. 
 \end{equation}
 Observe further that, since $\{i,j\}\in E_g$ is a grey edge of~$R$, and we are
 interested into counting graphs with a partition corresponding to~$R$ we
 can assume that $d\le 2c_i\le (1-d)$.
 Hence, by~\eqref{eq:C5P:s} we can apply Lemma~\ref{lem:C5Probability}
 with $c_0=d$ to infer that for each $\{i,j\}\in E_g$ there are at most
 \begin{equation}\label{eq:C5P:Bij}
   B_{ij} \le 2^{2s^2 \bleft(h(c_{ij}) - r(c_{ij})\right) } 
 \end{equation}
 possible ways to choose the $2c_{ij}s^2$ edges in $(V_i,V_j)$
 without inducing a~$C_5$.
 Now let $2\tilde c:=\sum_{\{i,j\} \in E_g} 2c_{i,j} / m_g$ and observe that
 \begin{equation*}
   \tilde c
   \eqByRef{eq:CountFine0} \hat c \frac{n^2}{2s^2 m_g}
   \leByRef{eq:C5P:s} \hat c \frac{k^2}{2(1-\eps)^2 m_g}
   \leByRef{eq:C5P:mg} 2(1+3\eps)\hat c 
   \leByRef{eq:CountFine0} (1+3\eps)c \le \frac{3}{4}
 \end{equation*}
 Therefore, since $f(x):=h(x)-r(x)$ is a concave function for $x \in (0,1)$, which is
 moreover non-decreasing for $x\le\frac34$ we can infer from
 Lemma~\ref{lem:Convexity} that
 \begin{equation} \label{eq:C5P:sum}
   \sum_{\{i,j\} \in E_g} f(c_{ij})
   \le m_g \cdot f(\tilde c)
   \le m_g \cdot f\big(c(1+3\eps)\big)
   \eqByRef{eq:C5P:mg} \frac{k^2}{4} f\big( c(1+3\eps) \big)\,.
 \end{equation}
 As $h(x)$ is a convex function with $h'(x)\le \log(1/x)$ and $r(x)$ is
 non-decreasing for $x\le 3/4$ we have
 \begin{equation*}
 	f\big(c(1+3\eps)\big) \le h(c+3\eps)-r(c) \le h(c) + 3\eps h'(c) - r(c)
 	\leByRef{eq:C5P:eps} h(c)-r(c) + \frac{\gamma}{10}\, . 
 \end{equation*}
 Together with~\eqref{eq:C5P:Bij} and~\eqref{eq:C5P:sum} this implies
 \begin{equation*}
   B=\prod_{\{i,j\}\in E_g} B_{ij}
   \le 2^{2 s^2 (k^2/4) \bleft( h(c)-r(c)+\gamma/10\right)}
   \leByRef{eq:C5P:s} 2^{(n^2/2) \bleft( h(c)-r(c) +\gamma/10\right)}
   \,,
 \end{equation*}
 which in turn together with~\eqref{eq:CP5:A} gives
 \begin{equation*} 
   N_g 
   \le 2^{\frac 34 k^2\log n}
     \cdot  2^{(n^2/2) \bleft( h(c)-r(c) +\gamma/10\right)}
   \leByRef{eq:C5P:n0} 2^{\bleft( h(c)-r(c) \right)\binom n2+(\gamma/5) \binom n2} \,.
  \end{equation*}
  By multiplying this with~\eqref{eq:count:prod} from the proof of
  Lemma~\ref{lem:Counting} we obtain
  \[
     |\cR(R,\eps,\eps',d,k',n,c) \cap \Forb^*_{n,1/6k}(C_5,c) | \le 
     2^{\bleft( h(c)-r(c) \right)\binom n2 + \gamma \binom n2} \,,
   \]
 as claimed.
\end{proof}

Lemma~\ref{lem:NoLinHomSet} together with the type lemma, Lemma~\ref{lem:type},
implies an upper bound on $|\Forb_{n,\eta}^*(C_5,c)|$.  Now we can combine
this with the lower bound on $|\Forb^*_n(C_5,c)|$ which follows from
Lemma~\ref{lem:lower} in order to prove Theorem~\ref{thm:C5LinHom}.

\begin{proof}[Proof of Theorem~\ref{thm:C5LinHom}]
  Observe first that, since~$C_5$ is self-complementary, it suffices to
  prove Theorem~\ref{thm:C5LinHom} for $c\le 1/2$. Hence we assume $c\le
  1/2$ from now on.

  We first need to set up some constants.  Given~$c\in(0,\frac12]$, we choose
  $\gamma>0$ such that $2\gamma<r(c)$.  For input~$c$ and~$\gamma$
  Lemma~\ref{lem:lower} supplies us with a constant~$n_{\subref{lem:lower}}$.  We
  apply Lemma~\ref{lem:NoLinHomSet} with input~$c$ and~$\gamma/2$ to obtain
  $\eps_{\subref{lem:NoLinHomSet}}$, $d_0$, and $k_0$. Next, we apply
  Lemma~\ref{lem:GreyTriangle} with input~$d_0$ and obtain constants
  $\eps_{\subref{lem:GreyTriangle}}$ and $\eps'$.  Let
  $\eps:=\min\{\eps_{\subref{lem:NoLinHomSet}},\eps_{\subref{lem:GreyTriangle}}\}$.
  For input~$\eps$, $\eps'$, and~$k_0$ Lemma~\ref{lem:type} returns
  constants~$k_1$ and~$n_{\subref{lem:type}}$.  With this parameter~$k_1$ we
  continue the application of Lemma~\ref{lem:NoLinHomSet} and
  obtain~$n_{\subref{lem:NoLinHomSet}}$. Choose
  $n_0:=\max\{n_{\subref{lem:lower}},n_{\subref{lem:type}},n_{\subref{lem:NoLinHomSet}},\tfrac{3}{\sqrt{\gamma}}k_1\}$,
  assume that~$n\ge n_0$, and set $\eta:=1/(6k_1)$.

  Now, for each graph~$G\in \Forb_{n,\eta}^*(C_5,c)$ we apply the type
  lemma, Lemma~\ref{lem:type}, with parameters $\eps$, $\eps'$, $k_0$, $k'$ and
  $d$ and obtain an $(\eps,\eps',d,k')$-type $R$ of~$G$ on $k$ vertices with
  $k_0\le k\le k_1$ and with at most $\eps k^2$ non-edges.  Let~$\tilde\cR$ be
  the set of types obtained from these applications of
  Lemma~\ref{lem:type}. It follows that $|\tilde\cR|\le
  4^{\binom{k_1}{2}}2^{k_1}\le 2^{k_1^2}$. By Lemma~\ref{lem:GreyTriangle}
  applied with $d_0$, $\eps$, and $\eps'$, no coloured graph in~$\tilde\cR$ contains a triangle with three grey edges. Hence
  by Lemma~\ref{lem:NoLinHomSet} applied with $c$, $\gamma/2$, $\eps$,
  $\eps'$, $n$, and $k$ we have
  \begin{equation*}
    \Big|\,\cR(R,\eps,\eps',d,k',n,c) \,\cap\, \Forb^*_{n,\eta}(C_5,c) \,\Big|
    \le 2^{\bleft( h(c)-r(c) \right)\binom n2 + \frac12 \gamma \binom n2} \,.
  \end{equation*} 
  Since, by Lemma~\ref{lem:type},
  \[ 
        \Forb^*_{n,\eta}(C_5,c) \subset \bigcup_{R\in\tilde\cR}
        \Big( \cR(R,\eps,\eps',d,k',c,n) \,\cap\, \Forb^*_{n,\eta}(C_5,c) \Big)
   \] 
   we conclude from the choice of~$n_0$ that
   \begin{equation*}
     |\Forb^*_{n,\eta}(C_5,c)|
     \le 2^{k_1^2} \cdot 2^{\bleft( h(c)-r(c) \right)\binom n2
     + \frac12 \gamma \binom n2} \le 2^{\bleft( h(c)-r(c) \right)\binom n2 + \gamma \binom n2}
    \,.
   \end{equation*}
   On the other hand, by Lemma~\ref{lem:lower} and the choice of~$n_0$ we have
   \begin{equation*}
     |\Forb^*_n(C_5,c)| \ge 2^{h(c) \binom n2 - \gamma\binom n2}
     \,.
   \end{equation*}
   Since $2\gamma<r(c)$ by the choice of~$\gamma$,
   this implies that almost all graphs
   in $\Forb^*_n(C_5,c)$ satisfy $\hom(G)\ge \eta n$.
\end{proof}

%





\bibliographystyle{amsplain_yk} 
\bibliography{perfectgraphs}

\end{document}